\title{$K$-bad spheres}
\author{Martin Bendersky}
\author{Robert Thompson}
\address{Department of Mathematics and Statistics, Hunter College and the Graduate Center, CUNY, 
New York, NY 10065}
\email{mbendersk1@gmail.com}
\email{robert.thompson@hunter.cuny.edu}
\numberwithin{equation}{section}
\newtheorem{theorem}[equation]{Theorem}
\newtheorem{definition}[equation]{Definition}
\newtheorem{lemma}[equation]{Lemma}
\DeclareMathOperator{\holim}{holim}
\DeclareMathOperator{\Tor}{Tor}
\newcommand{\Z}{\mathbf{Z}}
\DeclareMathOperator{\Cotor}{Cotor}
\DeclareMathOperator{\Tot}{Tot}
\DeclareMathOperator{\Ho}{Ho}
\DeclareMathOperator{\ch}{\mathit{Ch}_{*}}
\newcommand\thetapring{\mathop{\mbox{$\theta^{p}$-$\mathit{Rng}_p$}}}
\newcommand\thetapringAd{     \mathop{    \mbox{ $\theta^{p}$-$\mathit{Rng}_p$-$\mathit{Ad}_p$}      }       }
\newcommand\abp{\mathcal{A}b_{p} }
\newcommand\babp{\bar{\mathcal{A}}b_{p}}
\newcommand\Adp{Ad_{p} }
\def\S{\mathcal{S}_{*}}
\def\Sp{\mathcal{S}{p}}
\def\HoS{\text{Ho}_*}
\def\HoSp{\text{Ho}^{s}}
\begin{document}

\begin{abstract}
In this paper we look at the $E$-completion of topological spaces where $E$ is a $p$-local ring spectrum.  After a brief review of the concept of $E$-completion, we specialize to the case where $E=K$, $p$-local complex periodic $K$-theory, and consider the $K$-theory of the unstable sphere $S^{2n+1}$. We show that for certain values of $n$ and an odd prime $p$, the $K$-homology of the $K$-completion is not isomorphic to the $K$-homology of the sphere itself, thus in the terminology of Bousfield and Kan, these spheres are '$K$-bad'. 
\end{abstract}

\maketitle

%
%

\section{\bf $E$-completion, $E$-good and $E$-bad spaces}\label{E-completion}

Assume throughout that everything is localized at a given odd prime $p$.  We define $q=2(p-1)$.  The $p$-adic integers are denoted by $\Z_p$. 
We use $\S$ to denote the category of pointed spaces, and $\HoS = \Ho \S$ for the pointed homomotopy category.  
We use $\Sp$ to denote the category of spectra as in \cite{HSS} for example, and $\HoSp = \Ho \Sp$ for the stable homotopy category.  If $A\in \HoS$ 
or $\HoSp$ and $E \in \HoSp$ we use $A_{E}$ to denote the Bousfield localization of $A$ with respect to $E$ (see \cite{BO1},\cite{BO2}).

This paper is based on the framework of 'Cosimplicial resolutions and homotopy spectral sequences in model categories' by A. K. Bousfield~\cite{BO9}.   
Briefly, in that paper Bousfield considers a left proper pointed simplicial model category $\mathcal{C}$ with a chosen class of groups objects 
$\mathcal{G}$ in $\Ho\mathcal{C}$.  The class  $\mathcal{G}$ is used to define a simplicial model category structure on $c\mathcal{C}$, the category of cosimplicial objects in $\mathcal{C}$.  The resulting category, called the 'resolution model category', is denoted by $c\mathcal{C}^{\mathcal{G}}$. If $A$ is an object in 
$\mathcal{C}$,  regarding  $A$ as a constant cosimplicial object,  a $\mathcal{G}$-resolution of $A$ is a trivial cofibration to a fibrant object $A\to Y^{\bullet}$ in $c\mathcal{C}^{\mathcal{G}}$, guaranteed to exist by virtue of the  model category structure.  Then  $\hat{L}_{\mathcal{G}}A$ is defined to be the total object $\Tot Y^{\bullet}$.    
This defines an endofunctor on $\Ho\mathcal{C}$ 
called the $\mathcal{G}$-completion of $A$.   We are primarily interested in the cases where $\mathcal{C} = \S$ or $\Sp$.

In the case of $\mathcal{C} = \S$, if $E$ is a $p$-local ring spectrum, and $\mathcal{G}$ is taken to be the class $\mathcal{G} = \{\Omega^{\infty}N \vert\,\,\text{ $N$ 
is an $E$-module spectrum}    \}$,  then this defines a functor $\hat{L}_\mathcal{G}$ on $\HoS$ which generalizes the completion functors studied in \cite{BK2}, \cite{BH},  and \cite{BT}.  We usually just write $\hat{L}_{\mathcal{G}}A$ as $\hat{A}_E$ in this case. \break

Definition 5.1 Chapter 1 of \cite{BK2} concerning $p$-completion generalizes to:

\begin{definition}[See Definition 8.3 of \cite{BO9}]   
\label{K-good-bad}
A space $A\in\mathcal{S}_*$ is called 
\begin{enumerate}
\item $E$-complete  if $\alpha:A \xrightarrow{\simeq} \hat{A}_E$.
\item $E$-good  if $\hat{A}_E$ is $E$-complete.
\item $E$-bad  if $\hat{A}_E$ is not $E$-complete.
\end{enumerate}
 \end{definition}
 
For a suitable ring spectrum $E$, and a space $A$,  the $E$-completion of  $A$ is $E$-local,   and $A$ is $E$-good if and only if 
$A_E\simeq \hat{A}_E$.    This is the topic of Lemma \ref{E-completion-localization} in the appendix.

 In \cite{BT} we studied $\widehat{S}^{2n+1}_{K}$,  $K$-theory completion of $S^{2n+1}$, for $p$ odd and $n\ge 1$.  (This work was extended to $p=2$ and to various Lie Groups in \cite{BD} and elsewhere.)  
The combined results of \cite{BO1}, \cite{MT3}, and  \cite{BT}   
allowed us to conclude that the map 
$S^{2n+1}_{K}\to\widehat{S}^{2n+1}_{K}$, from the localization to the completion, induces an isomorphism in $\pi_i$ for $i\ge 2$.  We know the fundamental group of $S^{2n+1}_{K}$ is trivial for $n\ge 1$ by Mislin \cite{Mis1}.  Our stated result in \cite{BT} included the case of  $\pi_1\widehat{S}^{2n+1}_{K}$  but in fact the methods there were not sufficient to determine that case.   
In this paper we show the following. 

\begin{theorem}
Let $n\ge 2$, $p$ is an odd prime.   If $(p-1) \vert (n-1)$, then  $\widehat{S}^{2n+1}_{K}$ is not simply connected and 
the sphere $S^{2n+1}$  is $K$-bad. 
\end{theorem}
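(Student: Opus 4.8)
The plan is to reduce the entire statement to the single assertion that $\pi_1\widehat{S}^{2n+1}_{K}\neq 0$ whenever $(p-1)\mid(n-1)$. Indeed, the combined results of \cite{BO1}, \cite{MT3}, and \cite{BT} already give that $S^{2n+1}_{K}\to\widehat{S}^{2n+1}_{K}$ induces an isomorphism on $\pi_i$ for $i\ge 2$, while Mislin's computation in \cite{Mis1} gives $\pi_1 S^{2n+1}_{K}=0$ for $n\ge 1$. Hence a nonzero $\pi_1\widehat{S}^{2n+1}_{K}$ simultaneously shows that $\widehat{S}^{2n+1}_{K}$ is not simply connected and that the localization-to-completion map is not a weak equivalence. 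By Lemma~\ref{E-completion-localization} the latter is exactly the failure of $S^{2n+1}_{K}\simeq\widehat{S}^{2n+1}_{K}$, so $S^{2n+1}$ is $K$-bad in the sense of Definition~\ref{K-good-bad}. Thus everything reduces to detecting a single nontrivial class in $\pi_1$ of the completion.

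To reach $\pi_1$, I would work with the $K$-based unstable homotopy spectral sequence arising from a $\mathcal{G}$-resolution $S^{2n+1}\to Y^{\bullet}$, whose target is $\widehat{S}^{2n+1}_{K}=\Tot Y^{\bullet}$. Its $E_2$-term is computed as $\Ext$ in the category $\thetapring$ of $\theta^{p}$-rings, applied to the $\theta^{p}$-algebra structure on $K_*S^{2n+1}$, whose reduced part is free of rank one on a generator in odd degree and on which the Adams operation $\psi^{k}$ acts through a power of $k$ with exponent an affine function of $n$. The contributions to total degree $1$ come from the edge of the spectral sequence, and this is precisely the region where the Bousfield--Kan spectral sequence exhibits fringing: the abutment near $\pi_1$ is not a naive limit of $E_2$-data, which is exactly why the methods of \cite{BT} were unable to settle this case.

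The new input, and the step I expect to be the crux, is to bypass the fringe by analysing the $\Tot$-tower directly. The fundamental group of a totalization sits in a Milnor-type exact sequence $1\to\invlim^{1}\pi_{2}(\Tot_{\bullet}Y^{\bullet})\to\pi_1\Tot Y^{\bullet}\to\invlim\pi_1(\Tot_{\bullet}Y^{\bullet})\to 1$, and I would show that the $\invlim^{1}$ term is nonzero. Concretely, the degree-two groups of the tower are finitely generated $\Z_p$-modules whose transition maps are governed by operators built from $\psi^{k}-k^{\,n-1}$; by Fermat's little theorem $k^{\,n-1}\equiv 1\pmod p$ for every $p$-adic unit $k$ exactly when $(p-1)\mid(n-1)$, forcing these transition maps to be multiplication by elements of positive $p$-adic valuation, so that the tower fails to be Mittag--Leffler. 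The resulting nonzero $\invlim^{1}$ injects into $\pi_1\widehat{S}^{2n+1}_{K}$, giving the desired nontrivial fundamental group. The main obstacles are therefore twofold: first, pinning down the transition maps precisely enough via the $\theta^{p}$-algebra $\Ext$ computation to produce the exponent $n-1$ and the relevant valuation; and second, verifying that the $\invlim^{1}$ contribution genuinely survives into $\pi_1$ rather than being absorbed by the nonabelian fringe phenomena, which is the technical heart that \cite{BT} left open.
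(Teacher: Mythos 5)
Your opening reduction is exactly right and matches the paper: by the results of \cite{BO1}, \cite{MT3}, \cite{BT} together with Mislin's theorem that $\pi_1 S^{2n+1}_{K}=0$, everything comes down to producing a nonzero class in $\pi_1\widehat{S}^{2n+1}_{K}$, and Lemma \ref{E-completion-localization} (whose acyclicity hypothesis holds for $p$-local $K$) converts that into $K$-badness. The gap is in your detection mechanism. You posit that the groups $\pi_2(\Tot_s Y^{\bullet})$ are finitely generated $\Z_p$-modules and that transition maps of positive $p$-adic valuation force the tower to fail Mittag--Leffler, hence give $\invlim^1 \neq 0$. That inference is false: failure of Mittag--Leffler does not imply nonzero $\invlim^1$, and for towers of finitely generated $\Z_p$-modules the $\invlim^1$ term vanishes \emph{identically}, since such modules are compact and $\invlim^1$ vanishes for any tower of compact groups. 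Your specific situation is the worst case for your argument rather than the best: if the transition map $f$ has positive valuation, then in the defining sequence for $\invlim^1$ the map $(a_s)\mapsto(a_s - f(a_{s+1}))$ on $\prod_s A_s$ is surjective because $a_s = b_s + f(b_{s+1}) + f^2(b_{s+2}) + \cdots$ converges $p$-adically (compare the tower $\Z_p \xleftarrow{p} \Z_p \xleftarrow{p} \cdots$, whose images never stabilize yet whose $\invlim^1$ is zero). So either your $\pi_2$-groups are not finitely generated over $\Z_p$, in which case the valuation argument has no purchase, or $\invlim^1 = 0$ and the class must instead be found in $\invlim \pi_1(\Tot_s Y^{\bullet})$ --- which puts you squarely back in the nonabelian fringe region that, as the paper states explicitly, the spectral-sequence methods of \cite{BT} could not resolve. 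Your proposal is, in essence, a refinement of the \cite{BT} approach, and it founders on the same obstruction.

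The paper's actual route avoids the sphere's $\Tot$-tower altogether, and this is the idea your plan is missing. It applies Bousfield's fiber-square completion theorem (Theorem \ref{completion-of-a-fiber-square}, i.e.\ Theorem 11.7 of \cite{BO9}) to the James--Hopf fibration $F \to QS^{2n+1} \xrightarrow{j_p} Q(\Sigma^{2n+1}B_{q(n+1)-1})$, checking the torsion-free and strongly-collapsing-cobar hypotheses via \cite{MT3} and Theorem 10.11 of \cite{BO6}; since $S^{2n+1}\to F$ is a $K_*$-isomorphism it is a $p$-adic $K$-equivalence, yielding the completed fiber sequence (\ref{main-long-exact-sequence}). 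It then shows (Theorem \ref{main-completion-theorem}) that on basepoint components the $p$-adic $K$-completion commutes with $\Omega^{\infty}$ for these particular suspension spectra, the crux being Bousfield's $W$-functor theorem (Theorem \ref{bousfields-theorem}), which guarantees that the looped-down stable triple resolution is still a weak $\hat{\mathcal{G}}$-resolution. Consequently both infinite loop spaces have completions whose homotopy groups are the known $p$-completed $K$-local stable homotopy of $\Sigma^{\infty}S^{2n+1}$ and $\Sigma^{\infty}B_{q(n+1)-1}$ (\cite{TH1}, \cite{MT3}), and when $(p-1)\mid(n-1)$ the boundary map $\pi_{2}(\widehat{Q(\Sigma^{2n+1}B_{q(n+1)-1})}_{\hat{K}}) \to \pi_{1}(\widehat{S^{2n+1}}_{\hat{K}})$ in the resulting long exact sequence is nonzero. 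The fundamental group is thus detected by stable, computable data rather than by a $\invlim^1$ term, entirely consistent with the vanishing noted above.
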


The case of $\widehat{S}^{3}_{K}$ is still unsettled.  In all other cases, we can't say that the sphere is $K$-good in the strict sense of Definition \ref{K-good-bad} above because, while the $K$-localization is path connected, we don't know if the $K$-completion is.   However we can say that the map from the $K$-localization to the component of the $K$-completion that contains the basepoint is a homotopy equivalence. 

  We are making use of the important and beautiful results of Bousfield in \cite{BO6}.  We hope that further applications of Bousfield's ideas result from this paper.

\section{\bf The $p$-adic $K$-completion of a fiber square}

Expanding a little on the previous section, we briefly recall some definitions from \cite{BO9}.  We have a left proper pointed simplicial model category $\mathcal{C}$ and we're given a class of group objects 
$\mathcal{G} \subset \Ho\mathcal{C}$.   
A map 
$i:A\to B$
 in $\Ho\mathcal{C}$ is called $\mathcal{G}$-{\it monic} when $i^*:[B,G]_n \to [A,G]_n$ is onto for each $G\in \mathcal{G}$ and $n\ge 0$.   An object $Y\in\Ho\mathcal{C}$ is called $\mathcal{G}$-{\it injective} if $i^*:[B,Y]_n \to [A,Y]_n$ is onto for every $\mathcal{G}$-monic map $i:A\to B$ and $n\ge 0$.   We assume every object is the source of a $\mathcal{G}$-monic map to a $\mathcal{G}$-injective target.    The objects in $\mathcal{G}$ are called $\mathcal{G}$-{\it injective models}, and $\Ho\mathcal{C}$ is said to {\it have enough $\mathcal{G}$-injectives}. 

Bousfield proceeds to construct a model category structure on $c\mathcal{C}$, denoted by $c\mathcal{C}^{\mathcal{G}}$,  which is a generalization of the construction in \cite{DKS}.  The weak equivalences, called $\mathcal{G}$-equivalences,  are maps $f:X^{\bullet}\to Y^{\bullet}$ such that  
\[   f^*: [Y^{\bullet},G]_n \to [X^{\bullet},G]_n \]
is a weak equivalence of simplicial groups for all $G\in\mathcal{G}$, $n\ge 0$.
Regarding an object $A\in\mathcal{C}$ as the constant cosimplicial object in $c\mathcal{A}$, a resolution of $A$ is a map
$A\to Y^{\bullet}$ which is a trivial cofibration  to a fibrant object in the model category $c\mathcal{C}^{\mathcal{G}}$.   A resolution is used to define derived functors and completions.  The $\mathcal{G}$-completion of $A$ is defined to be $\Tot(Y^{\bullet})$. 

For objects in $\mathcal{C}$ a map $i:A\to B$  is called a $\mathcal{G}$-{\it equivalence} if it is a $\mathcal{G}$-equivalence regarding $A$ and $B$ as constant cosimplicial objects.  Bousfield proves that a $\mathcal{G}$-equivalence gives a homotopy equivalence of completions (Corollary 6.7 \cite{BO9}). 
He also shows that for purposes of computing derived functors and constructing completions, it is sufficient to have a {\it weak resolution}:  this is merely a 
$\mathcal{G}$-equivalence $A\to Y^{\bullet}$  such that each $Y^n$ is $\mathcal{G}$-injective.

Now we specialize to the cases where $\mathcal{C}$ is $\S$ or $\Sp$, and completion will be  Bousfield's $p$-complete version of $K$-completion.  
For a space or a spectrum $Z$ let $Z^{\wedge}$ denote the $p$-completion as in \cite{BO1}, \cite{BO2} and \cite{BK2}.  Let 
\begin{equation}\label{stable-K-theory-models}
{\mathcal{H}} = 
\{  N \vert\,\,\text{ $N$ is a $K$-module spectrum} \}\subset  \HoSp,  
\end{equation}
\begin{equation}\label{stable-p-complete-K-theory-models}
\hat{\mathcal{H}} = 
\{  N \vert\,\,\text{ $N$ is a $p$-complete $K$-module spectrum} \}\subset  \HoSp, 
\end{equation}
and
\begin{equation}\label{unstable-p-adic-K-completion-functor}
\hat{\mathcal{G}} = 
\{  \Omega^{\infty}N \vert\,\,\text{ $N$ is a $p$-complete $K$-module spectrum} \}\subset  \HoS
\end{equation}
all of which are classes of injective models.
 The resulting $\hat{\mathcal{G}}$-completion functor is what Bousfield calls the $p$-adic $K$-completion of a space $A$, denoted $\hat{A}_{\hat{K}}$.  
 (The class $\hat{\mathcal{G}}$ is denoted $\hat{\mathcal{G}^{'}}$ in \cite{BO9}.) The same definition can be applied to spectra 
 using $\hat{\mathcal{H}}$.  Bousfield applies this completion to a fiber square. 
 
 \begin{theorem}[Theorem 11.7 \cite{BO9}]\label{completion-of-a-fiber-square} Suppose we have a fiber square in $\S$
 \begin{equation}\label{fiber-square}
 \begin{tikzcd}
C \arrow{d}\arrow{r} & B\arrow{d} \\
A \arrow{r}  &\Lambda \\
 \end{tikzcd}
\end{equation}
in which all the spaces have torsion-free $K^{*}(-; \hat{\Z}_p)$-cohomology.   Suppose further that the $K_{*}(-; {\Z}/p)$-cobar spectral sequence collapes strongly, which means 
\[
\Cotor_{s}^{K_*(\Lambda;\Z/p)}(K_*(A;\Z/p),K_*(B;\Z/p)) = 
\begin{cases} 
K_*(C;\Z/p)\,\,\,\text{for $s=0$}\\
0 \,\,\, \text{otherwise}.\end{cases}
\]
Then the $p$-adic $K$-completion functor carries (\ref{fiber-square}) to a homotopy fiber square. 
\end{theorem}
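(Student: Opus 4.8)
The plan is to exploit the fact that the $p$-adic $K$-completion is a totalization and that totalization commutes with homotopy fiber squares, thereby reducing the theorem to a statement about cosimplicial resolutions. First I would choose a resolution $\Lambda\to\Lambda^{\bullet}$ in $c\S^{\hat{\mathcal{G}}}$ and then lift the maps $A\to\Lambda$ and $B\to\Lambda$ to compatible resolutions $A\to A^{\bullet}$ and $B\to B^{\bullet}$, using the lifting properties of the resolution model structure. Because each object of $\hat{\mathcal{G}}$ has the form $\Omega^{\infty}N$ for a $p$-complete $K$-module spectrum $N$, and $\Omega^{\infty}$ carries fiber sequences of spectra to fiber sequences of spaces while $p$-complete $K$-modules are closed under finite homotopy limits, the levelwise homotopy fiber product $F^{\bullet}=A^{\bullet}\times^{h}_{\Lambda^{\bullet}}B^{\bullet}$ is again levelwise $\hat{\mathcal{G}}$-injective. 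Since $\Tot$ is itself a homotopy limit it commutes with the homotopy pullback, so
\[
\Tot(F^{\bullet})\simeq\Tot(A^{\bullet})\times^{h}_{\Tot(\Lambda^{\bullet})}\Tot(B^{\bullet})\simeq\hat{A}_{\hat K}\times^{h}_{\hat\Lambda_{\hat K}}\hat{B}_{\hat K}.
\]

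It then remains to show that the canonical augmentation $C\to F^{\bullet}$ is a weak $\hat{\mathcal{G}}$-resolution of $C$, for then $\Tot(F^{\bullet})\simeq\hat{C}_{\hat K}$ and the square completes to a homotopy fiber square. Having arranged that each $F^{n}$ is $\hat{\mathcal{G}}$-injective, I only need to verify that $C\to F^{\bullet}$ is a $\hat{\mathcal{G}}$-equivalence, after which the sufficiency of weak resolutions (and Corollary 6.7 of \cite{BO9}) finishes the identification. Under the torsion-free hypothesis on $K^{*}(-;\hat{\Z}_p)$, a map is a $\hat{\mathcal{G}}$-equivalence precisely when it induces an isomorphism on $K_{*}(-;\Z/p)$, so the problem becomes the computation of the cohomotopy of the cosimplicial abelian group $n\mapsto K_{*}(F^{n};\Z/p)$.

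The computational heart is to identify this cosimplicial $K$-homology with the cobar complex. At each level, $A^{n}$, $B^{n}$, $\Lambda^{n}$ are retracts of products of objects $\Omega^{\infty}N$, so their mod-$p$ $K$-homologies are cofree (extended) comodules over the coalgebra $K_{*}(\Lambda^{n};\Z/p)$; consequently the homotopy fiber product realizes the cotensor product with no higher derived contributions, giving $K_{*}(F^{n};\Z/p)\cong K_{*}(A^{n};\Z/p)\cotensor_{K_{*}(\Lambda^{n};\Z/p)}K_{*}(B^{n};\Z/p)$. As $n$ varies this is exactly the cobar complex computing $\Cotor$, so $\pi^{s}K_{*}(F^{\bullet};\Z/p)\cong\Cotor_{s}^{K_{*}(\Lambda;\Z/p)}(K_{*}(A;\Z/p),K_{*}(B;\Z/p))$. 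The strong collapse hypothesis identifies this with $K_{*}(C;\Z/p)$ concentrated in cohomotopical degree zero, which is precisely the assertion that $C\to F^{\bullet}$ induces an isomorphism on $K_{*}(-;\Z/p)$, i.e.\ that it is a $\hat{\mathcal{G}}$-equivalence.

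The main obstacle is this last identification together with the convergence bookkeeping. Establishing that the levelwise homotopy fiber product computes the cotensor product with no correction terms, and that the totalization of these cotensors assembles into the cobar complex without a lingering $\invlim^{1}$ obstruction, is exactly where the two hypotheses earn their keep: torsion-freeness of $K^{*}(-;\hat{\Z}_p)$ forces the relevant Künneth and universal-coefficient maps to be isomorphisms, so that $K_{*}(-;\Z/p)$ is the mod-$p$ reduction of the $p$-complete theory and no $\Tor$ terms intrude, while $p$-completeness supplies the exactness needed for the Tot tower to converge. I would expect the verification that homotopy fiber products of $\hat{\mathcal{G}}$-injectives remain $\hat{\mathcal{G}}$-injective, and the careful comparison of the cobar and Eilenberg--Moore filtrations, to absorb most of the technical effort.
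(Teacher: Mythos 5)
Note first that the paper contains no proof of this statement to compare against: it is imported verbatim as Theorem 11.7 of \cite{BO9}, with the proof living entirely in Bousfield's paper. So your proposal stands or falls on its own, and it has a genuine gap at its load-bearing step. You claim that the levelwise homotopy fiber product $F^{\bullet}=A^{\bullet}\times^{h}_{\Lambda^{\bullet}}B^{\bullet}$ is again levelwise $\hat{\mathcal{G}}$-injective, justified by the facts that $\Omega^{\infty}$ preserves fiber sequences and that $p$-complete $K$-module spectra are closed under finite homotopy limits. That argument would require the structure maps $A^{n}\to\Lambda^{n}\leftarrow B^{n}$ to be infinite-loop maps arising from maps of $K$-module spectra. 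But the resolution model structure only supplies space-level maps between retracts of products of spaces $\Omega^{\infty}N$; such maps are in general not infinite-loop maps, and the homotopy pullback of injectives along arbitrary maps is generally not injective --- exactly as, in the classical unstable Adams setting, the fiber of a non-principal map between Eilenberg--MacLane spaces (a two-stage Postnikov system) fails to be a GEM, hence fails to be injective. Without levelwise injectivity, $C\to F^{\bullet}$ cannot be a weak $\hat{\mathcal{G}}$-resolution even if it is a $\hat{\mathcal{G}}$-equivalence, so the identification $\Tot(F^{\bullet})\simeq\hat{C}_{\hat{K}}$ --- the heart of your argument --- is unsupported.

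The same conflation undermines your computational step. Being a retract of a product of spaces $\Omega^{\infty}N$ does not make $K_{*}(A^{n};\Z/p)$ a cofree (extended) comodule over the coalgebra $K_{*}(\Lambda^{n};\Z/p)$: the comodule structure is induced by the arbitrary space-level map $A^{n}\to\Lambda^{n}$, and the mod-$p$ $K$-homology of an infinite loop space is a free commutative algebra (the McClure-type computations quoted in this paper), not a cofree comodule over an arbitrary base. So the claim that each levelwise fiber square realizes the cotensor product $K_{*}(A^{n};\Z/p)\cotensor_{K_{*}(\Lambda^{n};\Z/p)}K_{*}(B^{n};\Z/p)$ with no derived terms needs genuine hypotheses; compare the application in this paper, where even for a single very concrete fiber square one must verify that $(j_p)_*$ is a surjection of coalgebras and then invoke Theorem 10.11 of \cite{BO6} to obtain strong collapse. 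Likewise, identifying $\pi^{s}$ of your cosimplicial comodule with $\Cotor$ of the original corner coalgebras would require that the chosen resolutions induce relatively injective resolutions in mod-$p$ $K$-homology, which arbitrary compatible $\hat{\mathcal{G}}$-resolutions do not guarantee. (A smaller issue: only the implication from $K_{*}(-;\Z/p)$-isomorphism to $\hat{\mathcal{G}}$-equivalence is available via the Adams-type argument used elsewhere in the paper; your ``precisely when'' asserts more than is needed or justified.) The parts of your outline that are sound --- $\Tot$ of Reedy fibrant objects commuting with homotopy pullbacks, and the sufficiency of weak resolutions via Corollary 6.7 of \cite{BO9} --- do not rescue the argument, because both are contingent on the injectivity claim that fails.
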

 
Our example of this is
\begin{theorem}
The fiber sequence 
\[
F \to QS^{2n+1}\xrightarrow{j_p} Q(\Sigma^{2n+1}B_{q(n+1)-1}),
\]
where $j_p$ is the $p^{\text{th}}$ Hopf-James map and $B_{q(n+1)-1}$ is a stunted $B\Sigma_p$ localized at $p$ (see \cite{MT3}),
satisfies the hypothesis of Theorem \ref{completion-of-a-fiber-square}.  Here $Q$ denotes the functor $\Omega^{\infty}\Sigma^{\infty}$. 
\end{theorem}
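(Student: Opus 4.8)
The plan is to present the fiber sequence as a homotopy fiber square and then verify the two hypotheses of Theorem~\ref{completion-of-a-fiber-square} — torsion-freeness of $K^{*}(-;\hat{\Z}_p)$ and strong collapse of the mod $p$ cobar spectral sequence — by computing the $K$-homology of the spaces involved. First I would take $A=*$, $B=QS^{2n+1}$, $\Lambda=Q(\Sigma^{2n+1}B_{q(n+1)-1})$ and $C=F$ in (\ref{fiber-square}), so that the square is exactly the homotopy pullback expressing $F$ as the fiber of $j_p$. Since $K_*(*;\Z/p)=\Z/p$ is the trivial comodule, the comodule structure on $K_*(QS^{2n+1};\Z/p)$ over $K_*(\Lambda;\Z/p)$ is the one induced by $j_p$, and the cobar hypothesis reduces to showing that
\[
\Cotor_s^{K_*(\Lambda;\Z/p)}\bigl(\Z/p,\,K_*(QS^{2n+1};\Z/p)\bigr)
\]
vanishes for $s>0$ and equals $K_*(F;\Z/p)$ when $s=0$.

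The computational engine is the Snaith splitting $\Sigma^{\infty}QX\simeq\bigvee_{k\ge 0}\Sigma^{\infty}D_kX$, where $D_kX=E\Sigma_k\ltimes_{\Sigma_k}X^{\wedge k}$, together with the identification (the content of \cite{MT3}, and the reason the stunted $B\Sigma_p$ appears) of the $p$-local $p$-th extended power $D_pS^{2n+1}$ with $\Sigma^{2n+1}B_{q(n+1)-1}$. Under this splitting the $p$-th James–Hopf map $j_p$ is $\Omega^{\infty}$ applied to the stable projection onto the $p$-th summand, hence is an infinite loop map; consequently $j_{p*}$ is a map of Hopf algebras $K_*(QS^{2n+1};\Z/p)\to K_*(\Lambda;\Z/p)$. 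I would then record the $K$-homology and $K$-cohomology of each space: using the splitting and the known $K$-homology of extended powers of spheres and of $B\Sigma_p$, one identifies $K_*(QS^{2n+1};\Z/p)$ and $K_*(\Lambda;\Z/p)$ as Hopf algebras, and one checks that $K^{*}(-;\hat{\Z}_p)$ is torsion-free for each. For $\Lambda$ this rests on the torsion-freeness of the $p$-completed $K$-cohomology of $B\Sigma_p$ (the completed representation ring); for $QS^{2n+1}$ it follows from the splitting once the summands are seen to have torsion-free completed $K$-cohomology.

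With the Hopf-algebra map $j_{p*}$ in hand, the collapse statement reduces to an algebraic fact in the spirit of the results of Bousfield in \cite{BO6}: if $j_{p*}$ is surjective and $K_*(QS^{2n+1};\Z/p)$ is cofree (equivalently, injective) as a comodule over the quotient Hopf algebra $K_*(\Lambda;\Z/p)$, then the higher $\Cotor$ groups vanish and the cotensor product $\Z/p\cotensor_{K_*(\Lambda;\Z/p)}K_*(QS^{2n+1};\Z/p)$ in degree $0$ is precisely the fiber coalgebra $K_*(F;\Z/p)$. This is the ``fibration of Hopf algebras'' situation, and the torsion-free inputs above are exactly what allow the mod $p$ analysis to be lifted to the integral $\hat{\Z}_p$ statement demanded by the theorem.

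The main obstacle is the middle step: pinning down $j_{p*}$ precisely on $K$-homology and verifying that it presents $K_*(QS^{2n+1};\Z/p)$ as a cofree comodule over $K_*(\Lambda;\Z/p)$. This requires controlling how the James–Hopf map interacts with the multiplicative (power-operation) structure on the $K$-homology of these infinite loop spaces, and checking that the generators arising from $D_pS^{2n+1}$ map onto a generating set of the quotient; because these are $K$-theory Hopf algebras rather than connected ones, the usual Milnor–Moore cofreeness must be supplied in the form appropriate to this setting. Everything else is bookkeeping with the Snaith splitting and a standard $\Cotor$ computation.
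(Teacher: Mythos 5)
There is a genuine gap, and it sits exactly where you flagged ``the main obstacle.'' Your plan rests on the claim that under the Snaith splitting $j_p$ \emph{is} $\Omega^{\infty}$ of the stable projection onto the $p$-th summand, hence an infinite loop map, hence induces a map of Hopf algebras. That premise is false: only the \emph{stabilization} $\Sigma^{\infty}j_p$ realizes the projection $\Sigma^{\infty}QS^{2n+1}\to \Sigma^{\infty}D_pS^{2n+1}$; the space-level James--Hopf map is not an infinite loop map (it is not even an H-map in general), which is precisely why no one reads off $(j_p)_*$ from the stable splitting and why both \cite{MT3} and this paper work instead with the loop map $\Omega j_p$. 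Without multiplicativity your whole framework of ``quotient Hopf algebra'' and Milnor--Moore-style cofreeness has no footing, and you supply no substitute argument for the surjectivity/cofreeness of $K_*(QS^{2n+1};\Z/p)$ over $K_*(\Lambda;\Z/p)$ --- you explicitly defer it. The paper closes exactly this hole by a different mechanism: loop the fibration once to $QS^{2n}\xrightarrow{\Omega j_p} Q(\Sigma^{2n}B_{q(n+1)-1})\to F$, quote Lemma 2.2 of \cite{MT3} (which computes $(\Omega j_p)_*(u_1)=0$ and $(\Omega j_p)_*\colon P(u_2,u_3,\dots)\xrightarrow{\cong}P(v_1,v_2,\dots)$ on McClure's description \cite{MC} of the mod $p$ $K$-homologies), and then compare the bar spectral sequences of the two path--loop fibrations to conclude that $(j_p)_*$ is a \emph{surjection of coalgebras}. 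That is all that is needed: Theorem 10.11 of \cite{BO6} converts a coalgebra surjection into strong collapse of the cobar spectral sequence, and the coalgebra structure --- unlike the Hopf algebra structure you invoke --- is induced by the diagonal and is natural for arbitrary maps, so no H-structure on $j_p$ is required.

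Two secondary omissions: the hypothesis of Theorem \ref{completion-of-a-fiber-square} requires torsion-free $K^{*}(-;\hat{\Z}_p)$ for \emph{all} spaces in the square, including the fiber $F$; you check only the base and total space. The paper handles $F$ by citing the result of \cite{MT3} that $i_*\colon K_*(S^{2n+1})\to K_*(F)$ is an isomorphism (which, as the paper notes, can also be re-derived from Lemma 2.2 plus the bar spectral sequence). Also, your Snaith-splitting route to torsion-freeness for $QS^{2n+1}$ and $\Lambda$ would require knowing that every $p$-local summand $D_{p^j}S^{2n+1}$, $j\ge 1$, has torsion-free $p$-completed $K$-cohomology and then managing the infinite wedge; the paper gets both statements in one stroke from Theorem 8.3 of \cite{BO6} (Theorem \ref{bousfields-theorem} here), whose conclusion includes that $K^{*}(\Omega^{\infty}_0E;\Z_p)$ is torsion free whenever $K^{*}(E;\Z_p)$ is. So while your overall shape (reduce to a surjectivity statement in mod $p$ $K$-homology feeding Bousfield's collapse criterion) matches the paper's, the load-bearing steps are either based on a false claim or left unproved.
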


\begin{proof}   Let $i:S^{2n+1}\to F$ be the lifting of the bottom cell.   For the first part of the hypothesis, 
the fact that the $K^{*}(-; \hat{\Z}_p)$-cohomology of the base space and total space are torsion free follows from Theorem 8.3  of \cite{BO6}.   In \cite{MT3}
it is proven that $i_*:K_*(S^{2n+1}) \to K_*(F)$ is an isomorphism so $K^{*}(F; \hat{\Z}_p)$ is torsion free as well.

For the second part loop the fiber sequence back and apply the  $K_{*}(-; {\Z}/p)$-bar spectral sequence to

\[
QS^{2n} \xrightarrow{\Omega j_p}  Q(\Sigma^{2n}B_{q(n+1)-1})  \to F 
\]

From \cite{MC} we know that as modules over $K_*(\text{pt};\Z/p)$
\begin{align*}
K_*(QS^{2n+1};\Z/p) = E(x_1,x_2,\dots)  \\ 
K_*(Q(\Sigma^{2n+1}B_{q(n+1)-1});\Z/p) = E(y_1,y_2,\dots),\\
K_*(QS^{2n};\Z/p) = P(u_1,u_2,\dots),\\
K_*(Q(\Sigma^{2n}B_{q(n+1)-1});\Z/p) = P(v_1,v_2,\dots).\\
\end{align*}
 Here $E$ denotes an exterior algebra, $P$ a polynomial algebra. 

In \cite{MT3} Lemma 2.2 it is proved that $(\Omega j_p)_*(u_1) = 0$ and $(\Omega j_p)_*$ maps the algebra $P(u_2,u_3,\dots)$ isomorphically onto  $P(v_1,v_2,\dots)$.   (Note that this fact and the bar spectral sequence imply that $i:S^{2n+1}\to F$ induces an isomorphism in $K$-theory. )

There is the diagram where the horizontal sequences are path-loop fiber sequences

\begin{equation}
\begin{tikzcd}
QS^{2n} \arrow[d,"\Omega j_p"]\arrow{r} & * \arrow{r}\arrow{d} & QS^{2n+1}\arrow[d,"j_p"]\\
Q(\Sigma^{2n}B_{q(n+1)-1}) \arrow{r}  &*\arrow{r} & Q(\Sigma^{2n+1}B_{q(n+1)-1})\\
 \end{tikzcd}
\end{equation}

To compute $\Tor^{K_*(QS^{2n};\Z/p)}_{s}(K_*(\text{pt};\Z/p),\Z/p)$ use the resolution \break $E(x_1,x_2,\dots)\otimes P(u_1,u_2,\dots)$.   The bar spectral sequence for the top row collapses. 
For $\Tor^{K_*(Q(\Sigma^{2n}B_{q(n+1)-1});\Z/p)}_{s}(K_*(\text{pt};\Z/p),\Z/p)$ use the resolution $E(y_1,y_2,\dots)\otimes P(v_1,v_2,\dots)$ and the bottom row bar spectral collapses.   The conclusion is that 
\[(j_p)_*: K_*(QS^{2n+1};\Z/p) \to K_*(Q(\Sigma^{2n+1}B_{q(n+1)-1});\Z/p)\] 
is a surjective map of coalgebras.     As mentioned in \cite{BO9}, this implies by Theorem 10.11 of  \cite{BO6} that the hypothesis of a strongly collapsing $K_*(-,\Z/p)$-theory cobar spectral sequence is satisfied. 

\end{proof}

The upshot of all of this is that there is a fiber sequence
\begin{equation}
\widehat{F}_{\hat{K}} \to \widehat{QS^{2n+1}}_{\hat{K}}   \xrightarrow{\widehat{j_p}} \widehat{Q(\Sigma^{2n+1}B_{q(n+1)-1})}_{\hat{K}}.
\end{equation}
Since $S^{2n+1} \to F$ induces an isomorphism $K$-homology,  it induces an isomorphism in  $N$-cohomology for every $p$-complete $K$-module spectrum $N$ by Lemma 13.1 of \cite{AD4}. This means that it is a $p$-adic $K$-equivalence, therefore    
$\widehat{S^{2n+1}}_{\hat{K}} \to \hat{F}_{\hat{K}}$  is an equivalence and the above fiber sequence becomes
\begin{equation}\label{main-long-exact-sequence} 
\widehat{S^{2n+1}}_{\hat{K}} \to \widehat{QS^{2n+1}}_{\hat{K}}   \xrightarrow{\widehat{j_p}} \widehat{Q(\Sigma^{2n+1}B_{q(n+1)-1})}_{\hat{K}}.
\end{equation}

%
%
\section{\bf The completion of some infinite loop spaces}

Next we want to compute the homotopy groups of $\widehat{QS^{2n+1}}_{\hat{K}}$  and  $\widehat{Q(\Sigma^{2n+1}B_{2(p-1)(n+1)-1})}_{\hat{K}}$.   In general $K$-completion doesn't commute with $\Omega^{\infty}$ but in the case of the spectra $\Sigma^{\infty}S^{2n+1}$ and $\Sigma^{\infty}B_{2(p-1)(n+1)-1}$ it does.

\begin{theorem}\label{main-completion-theorem}
Let  $X$ be either of the spaces  $S^{2n+1}$ or \break $B_{q(n+1)-1}$.  The subscript zero denotes the $0$-connected cover. 
Then
\begin{equation}
(\widehat{QX}_{\hat{K}})_{0} = 
(\widehat{\Omega^{\infty}\Sigma^{\infty}X}_{\hat{K}})_{0} \cong \Omega^{\infty}_{0}(\widehat{\Sigma^{\infty}X}_{\hat{K}}) 
\end{equation} 
\end{theorem}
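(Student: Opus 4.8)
The plan is to reduce the statement to the fact that $\Omega^\infty$ preserves homotopy limits, and then to recognize $\Omega^\infty$ applied to a stable resolution as an unstable resolution. Concretely, I would choose a stable weak $\hat{\mathcal{H}}$-resolution $\Sigma^\infty X \to Z^\bullet$, so that $\widehat{\Sigma^\infty X}_{\hat{K}} = \Tot Z^\bullet$ by definition of the $p$-adic $K$-completion of a spectrum. Since $\Omega^\infty$ is a right adjoint it commutes with the homotopy limit $\Tot$, giving $\Omega^\infty \widehat{\Sigma^\infty X}_{\hat{K}} \simeq \Tot(\Omega^\infty Z^\bullet)$. Each $Z^n$ is $\hat{\mathcal{H}}$-injective, hence $\Omega^\infty Z^n$ lies in $\hat{\mathcal{G}}$ and is $\hat{\mathcal{G}}$-injective; thus $\Omega^\infty Z^\bullet$ is a cosimplicial space with injective entries, augmented by $QX \to \Omega^\infty Z^\bullet$. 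If this augmentation is a weak $\hat{\mathcal{G}}$-resolution then, by Bousfield's characterization of completions via weak resolutions, $\Tot(\Omega^\infty Z^\bullet) \simeq \widehat{QX}_{\hat{K}}$ and the two completions agree.

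Everything therefore comes down to verifying that $QX \to \Omega^\infty Z^\bullet$ is a $\hat{\mathcal{G}}$-equivalence. First I would invoke the detection of $\hat{\mathcal{G}}$-equivalences by mod $p$ $K$-homology, as used repeatedly in the previous section via Lemma 13.1 of \cite{AD4}, reducing the problem to the assertion that the augmented cosimplicial coalgebra $K_*(QX;\Z/p) \to K_*(\Omega^\infty Z^\bullet;\Z/p)$ is a resolution, i.e. that its associated cobar complex computes $K_*(QX;\Z/p)$ in cosimplicial degree $0$ and vanishes in higher degrees. Here I would feed in McClure's computation \cite{MC}: $K_*(QX;\Z/p)$ is the free object, namely the exterior algebra $E(x_1,x_2,\dots)$ on operation-generated classes, built functorially from the stable $K$-homology $K_*(\Sigma^\infty X;\Z/p)$. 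Because this free functor is exact on the torsion-free stable input, applying it to the stable resolution $K_*(\Sigma^\infty X;\Z/p)\to K_*(Z^\bullet;\Z/p)$ should produce precisely an unstable resolution of $K_*(QX;\Z/p)$, which is exactly what is required.

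The main obstacle is this last passage from stable to unstable $K$-homology: unstable maps $[\,\cdot\,,\Omega^\infty N]$ see all of the infinite-loop (Dyer--Lashof / $\theta^p$-type) operations on $K_*(\Omega^\infty Z^n;\Z/p)$, not merely the underlying stable $K_*Z^n$, so the identification of $\Omega^\infty Z^\bullet$ as a resolution is not formal and rests entirely on the free exterior structure supplied by \cite{MC}, together with the torsion-freeness established via Theorem 8.3 of \cite{BO6}. I expect this is where the special nature of $X = S^{2n+1}$ and $X = B_{q(n+1)-1}$ is essential, since for a general space $\Omega^\infty$ does not commute with $K$-completion. Finally, the restriction to $0$-connected covers is exactly what is needed to discard the discrepancy in $\pi_0$ and $\pi_1$ between $QX$ and $\Omega^\infty$ of the stable completion, precisely the low-degree behavior the rest of the paper is devoted to analyzing; accordingly I would carry the equivalence only after passing to $(-)_0$ throughout, obtaining $(\widehat{QX}_{\hat{K}})_0 \simeq \Omega^\infty_0(\widehat{\Sigma^\infty X}_{\hat{K}})$.
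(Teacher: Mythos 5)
Your proof has the same skeleton as the paper's for the formal half of the argument (take a stable weak $\hat{\mathcal{H}}$-resolution, apply $\Omega^{\infty}$, note the levels are $\hat{\mathcal{G}}$-injective, and commute $\Omega^{\infty}$ past $\Tot$, which is the paper's Lemma \ref{Tot-and-Omega-infinity-commute}), but the step you correctly identify as the crux --- that $QX \to \Omega^{\infty}Z^{\bullet}$ is a weak $\hat{\mathcal{G}}$-resolution, i.e.\ the paper's Theorem \ref{looping-down-the-triple-resolution} --- is where your argument fails. You propose to feed in McClure's computation from \cite{MC}, but that computes $K_{*}(QY;\Z/p)$, i.e.\ mod-$p$ $K$-homology of $\Omega^{\infty}$ of \emph{suspension} spectra. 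The cosimplicial levels $Z^{n}$ are $p$-complete $K$-module spectra (in the paper's model, $\widehat{K^{n+1}\wedge \Sigma^{\infty}X}$), which are not suspension spectra, so \cite{MC} gives no description --- free exterior or otherwise --- of $K_{*}(\Omega^{\infty}Z^{n};\Z/p)$, and your sentence ``applying it to the stable resolution should produce precisely an unstable resolution'' has nothing to apply to the levels. The input the paper uses at exactly this point is Bousfield's Theorem 8.3 of \cite{BO6} (Theorem \ref{bousfields-theorem} here): for any spectrum $E$ with $K^{*}(E;\Z_p)$ torsion free, $K^{*}(\Omega_{0}^{\infty}E;\Z_p) \cong WK^{*}(E;\Z_p)_{H}$ functorially. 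You cite that theorem only for torsion-freeness, missing that it is the engine of the entire proof.

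There is a second, related gap. The functor $W$ (or any putative free functor) is not additive, so it does not preserve weak equivalences of simplicial objects in general; the paper therefore first upgrades the stable weak equivalence $K^{*}(X;\Z_p)_{H} \leftarrow K^{*}(K^{\bullet}X;\Z_p)_{H}$ to a \emph{simplicial homotopy equivalence}, using that torsion-freeness makes source and target cofibrant in $s\babp$ (Lemma \ref{cofibrancy}) together with Whitehead's theorem --- homotopy equivalences are preserved by arbitrary functors, weak equivalences are not. Moreover, your reduction of ``$\hat{\mathcal{G}}$-equivalence'' to a mod-$p$ $K$-homology cobar resolution is unjustified: a $\hat{\mathcal{G}}$-equivalence of cosimplicial spaces requires $N^{*}(-)$ to be a weak equivalence of simplicial groups for \emph{every} $p$-complete $K$-module spectrum $N$, and Lemma 13.1 of \cite{AD4} only detects that a single map of spaces inducing a $K_{*}$-isomorphism induces an $N^{*}$-isomorphism; it says nothing about quasi-isomorphisms of cosimplicial diagrams under all $N^{*}$. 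The paper closes this last step with Lemma 11.11 of \cite{BO9} ($N^{*}$ depends functorially on $p$-adic $K$-cohomology when $N_{*}$ is Ext-$p$-complete), again applied to a homotopy equivalence. To repair your proof, replace mod-$p$ $K$-homology and \cite{MC} by $p$-adic $K$-cohomology, Theorem \ref{bousfields-theorem}, the cofibrancy argument, and the functor $W$.
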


Before proving this we start by saying a little more about 
the $p$-adic $K$-completion functor, and produce an unstable resolution by looping down a stable resolution. 

Let $\Sp$ denote the model category of spectra as in \cite{HSS}, and let $\HoSp = \Ho \Sp$ denote the stable homotopy category.  
We have the classes of injective models ${\mathcal{H}}$ and $\hat{\mathcal{H}}$ defined in (\ref{stable-K-theory-models}) and 
(\ref{stable-p-complete-K-theory-models}).   
Since $K$ is represented by a structured ring spectrum in $\Sp$ (see \cite{MQRT}), there is a 
 triple on $\Sp$ which takes a spectrum $A$ to $K\wedge A$, with the unit induced by $S\to K$.
This triple satisfies all the conditions of Theorem 7.4 in \cite{BO9} with respect to ${\mathcal{H}}$, and  therefore the triple resolution $A\to K^{\bullet}A$ is a weak ${\mathcal{H}}$-resolution of $A$ and can be used to compute the homotopy groups of $\hat{A}_{{K}}$.  Stably $K$-theory completion is well understood. 
Now $p$-complete the spectra $K^{{\wedge}n}A$ to get a map of cosimplicial spectra $A\to \widehat{K^{\bullet}A}$ and apply the same argument given in the proof of 11.5 of \cite{BO9}:  the map $A\to K^{\bullet}A$ is an ${\mathcal{H}}$-equivalence, hence a  $\hat{\mathcal{H}}$-equivalence since $\hat{\mathcal{H}} \subset {\mathcal{H}}$.  The map $K^{\bullet}A \to \widehat{K^{\bullet}A}$ is obviously an $\hat{\mathcal{H}}$-equivalence.   
The spectra $\widehat{K^{n}A}$ are 
$\hat{\mathcal{H}}$-injective, so $A\to \widehat{K^{\bullet}A}$ is a weak $\hat{\mathcal{H}}$ resolution.  Replace this with a Reedy fibrant replacement
 which, by a harmless abuse of notation, we still refer to as $A\to {\widehat{K^{\bullet}A}}$.

\begin{proof}[Proof of Theorem \ref{main-completion-theorem}]
For either of  the spaces $X$ of Theorem \ref{main-completion-theorem}, apply $\Sigma^{\infty}$ and the construction given above in the stable category
\begin{equation} \label{stable-resolution}
\Sigma^{\infty}X \to \widehat{K^{\bullet}  \Sigma^{\infty}X}.
\end{equation}
Then apply the functor $\Omega^{\infty}$ to this 
to obtain an augmented cosimplicial space
\begin{equation}
\Omega^{\infty}\Sigma^{\infty}X \to \Omega^{\infty}(\widehat{K^{\bullet}\Sigma^{\infty}X}). 
\end{equation} 
This cosimplicial space is level wise $\hat{\mathcal{G}}$-injective, since $\Omega^{\infty}$ of a $\widehat{\mathcal{H}}$-injective spectrum is 
a $\widehat{\mathcal{G}}$-injective space. 

Taking the component containing the basepoint,
\begin{equation}
\label{unstable-resolution}
\Omega^{\infty}\Sigma^{\infty}X \to \Omega^{\infty}_{0}(\widehat{K^{\bullet}\Sigma^{\infty}X})
\end{equation}
is an augmented cosimplicial space, and the target
is also  termwise $\widehat{\mathcal{G}}$-injective since the $n$th space is a retract
of the $n$th space of $\Omega^{\infty}(\widehat{K^{\bullet}\Sigma^{\infty}X})$.  

Using Theorem \ref{looping-down-the-triple-resolution} below, which will be proved in the next section, we conclude 

\[\widehat{\Omega^{\infty}\Sigma^{\infty}X}_{\hat{K}} = \Tot(\Omega^{\infty}_{0}(\widehat{K^{\bullet}\Sigma^{\infty}X})).\]

From the spectral sequence of a cosimplicial space we get 
\[\pi_n\Tot(\Omega^{\infty}(\widehat{K^{\bullet}  \Sigma^{\infty}X})) \cong 
\pi_n\Tot(\Omega^{\infty}_{0}(\widehat{K^{\bullet}  \Sigma^{\infty}X})),\quad\quad n\ge 1.\]

\vspace{.25cm}
By Lemma \ref{Tot-and-Omega-infinity-commute} we have 
\[
\Tot(\Omega^{\infty}(\widehat{K^{\bullet}  \Sigma^{\infty}X}))  =   \Omega^{\infty}\Tot(\widehat{K^{\bullet}  \Sigma^{\infty}X}).
\]
and the conclusion follows.

\end{proof}

\begin{theorem}\label{looping-down-the-triple-resolution}
Let  $X$ be as in Theorem \ref{main-completion-theorem}.  The map  (\ref{unstable-resolution})
of cosimplicial spaces is a weak $\hat{\mathcal{G}}$-resolution.
\end{theorem}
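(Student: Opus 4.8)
The plan is to verify the two conditions that define a weak $\hat{\mathcal{G}}$-resolution. Termwise $\hat{\mathcal{G}}$-injectivity of the target of (\ref{unstable-resolution}) has already been checked just above the statement (each term is a retract of $\Omega^{\infty}$ of a $\hat{\mathcal{H}}$-injective spectrum, and $\Omega^{\infty}$ of a $\hat{\mathcal{H}}$-injective is $\hat{\mathcal{G}}$-injective), so the entire content is to prove that (\ref{unstable-resolution}) is a $\hat{\mathcal{G}}$-equivalence. By definition this means that for every $G=\Omega^{\infty}N$ with $N$ a $p$-complete $K$-module spectrum and every $m\ge 0$, the induced map of simplicial groups
\[
[\Omega^{\infty}_{0}(\widehat{K^{\bullet}\Sigma^{\infty}X}),\Omega^{\infty}N]_m \to [\Omega^{\infty}\Sigma^{\infty}X,\Omega^{\infty}N]_m
\]
is a weak equivalence. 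I would write the augmentation term $\Sigma^{\infty}X$ as $M^{-1}$ and the cosimplicial spectrum $\widehat{K^{\bullet}\Sigma^{\infty}X}$ as $M^{\bullet}$, so that (\ref{unstable-resolution}) is exactly $\Omega^{\infty}_{0}$ applied to the augmented cosimplicial spectrum $M^{-1}\to M^{\bullet}$, which is the stable resolution (\ref{stable-resolution}).

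The first step is to identify mapping into $\Omega^{\infty}N$ with $N$-cohomology: the $(\Sigma^{\infty},\Omega^{\infty})$ adjunction gives $[\Omega^{\infty}_{0}M,\Omega^{\infty}N]_m\cong[\Sigma^{\infty}\Omega^{\infty}_{0}M,N]_m$ naturally in $M$, so the simplicial group above is the $N$-cohomology of the space $\Omega^{\infty}_{0}M$. The known fact that (\ref{stable-resolution}) is a weak $\hat{\mathcal{H}}$-resolution says precisely that $[M^{\bullet},N]_m\to[M^{-1},N]_m$ is a weak equivalence for all such $N$, i.e.\ that $M^{-1}\to M^{\bullet}$ is a $K_{*}(-;\Z/p)$-equivalence of (cosimplicial) spectra. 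The goal is therefore to promote this stable $\hat{\mathcal{H}}$-equivalence to an unstable $\hat{\mathcal{G}}$-equivalence after applying $\Omega^{\infty}_{0}$. The bridge I would use is Bousfield's determination of the $K$-homology of the zeroth space of a $K$-module spectrum in \cite{BO6}: the assignment $M\mapsto K_{*}(\Omega^{\infty}_{0}M;\Z/p)$ is built functorially out of $K_{*}(M;\Z/p)$, and a functor of this kind carries $K_{*}(-;\Z/p)$-equivalences to $K_{*}(-;\Z/p)$-equivalences. Hence $\Omega^{\infty}_{0}(M^{-1})\to\Omega^{\infty}_{0}(M^{\bullet})$ is again a $K_{*}(-;\Z/p)$-equivalence of spaces, and by Lemma 13.1 of \cite{AD4} such a map induces a weak equivalence on $[\,-,\Omega^{\infty}N\,]_m$ for every $p$-complete $K$-module spectrum $N$, which is exactly the required $\hat{\mathcal{G}}$-equivalence. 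The torsion-freeness of $K^{*}(X;\hat{\Z}_p)$ for the two spaces $X$ in question (from \cite{BO6}) is what keeps this $K$-homology functor well behaved, and is the reason the statement is restricted to $S^{2n+1}$ and $B_{q(n+1)-1}$.

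The main obstacle is precisely the step that cannot be done formally: one is tempted to deduce the unstable equivalence from the stable one by naturality of the counit $\Sigma^{\infty}\Omega^{\infty}M\to M$, but this counit is \emph{not} a $K_{*}(-;\Z/p)$-equivalence (already $\Sigma^{\infty}\Omega^{\infty}\Sigma^{\infty}X\to\Sigma^{\infty}X$ fails), so the reduction genuinely requires the structural input of \cite{BO6} describing how $K$-homology propagates through $\Omega^{\infty}$ of a $K$-module spectrum, together with the care needed to see that the functoriality respects the simplicial structure and not merely individual levels. A secondary, bookkeeping-level difficulty is the passage from $\Omega^{\infty}$ to the basepoint component $\Omega^{\infty}_{0}$: one must check that restricting to the component does not disturb the simplicial weak equivalence in positive degrees, the $\pi_{0}$ discrepancy being harmless and absorbed exactly as in the isomorphism $\pi_n\Tot(\Omega^{\infty}(\widehat{K^{\bullet}\Sigma^{\infty}X}))\cong\pi_n\Tot(\Omega^{\infty}_{0}(\widehat{K^{\bullet}\Sigma^{\infty}X}))$ for $n\ge1$ recorded above. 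Throughout I would follow the template of the proof of 11.5 in \cite{BO9}, which establishes the analogous commutation of $p$-adic $K$-completion with $\Omega^{\infty}$.
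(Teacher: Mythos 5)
Your reduction and your endgame match the paper's skeleton: you correctly isolate the content as showing (\ref{unstable-resolution}) is a $\hat{\mathcal{G}}$-equivalence, you reduce via the $(\Sigma^{\infty},\Omega^{\infty})$ adjunction to $N$-cohomology of the spaces, and your final step (once a $K$-theoretic equivalence is in hand, general $p$-complete $K$-module spectra $N$ follow) is essentially the paper's, which invokes Lemma 11.11 of \cite{BO9} at that point rather than \cite{AD4}. But the bridge step contains a genuine gap. You assert that because $K$-theory of $\Omega^{\infty}_{0}M$ is a functor of the stable $K$-theory of $M$ (Theorem 8.3 of \cite{BO6}, i.e.\ $K^{*}(\Omega^{\infty}_{0}M;\Z_p)\cong WK^{*}(M;\Z_p)_{H}$), ``a functor of this kind carries $K_{*}(-;\Z/p)$-equivalences to $K_{*}(-;\Z/p)$-equivalences.'' That inference is false for the relevant notion of equivalence. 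The stable input gives a \emph{weak equivalence} of simplicial (graded) abelian objects, i.e.\ an isomorphism on homotopy of the associated simplicial groups, and a non-exact functor applied levelwise does not preserve such weak equivalences. Bousfield's $W$ is the left adjoint of a forgetful functor --- a free-algebra-type, non-additive functor --- so it has no more right to preserve these equivalences than the symmetric algebra functor has to preserve quasi-isomorphisms of chain complexes. Naturality gives compatibility with the cosimplicial structure maps, but not preservation of the weak equivalence; your parenthetical worry about ``respecting the simplicial structure and not merely individual levels'' points at the wrong difficulty.

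The paper supplies exactly the missing mechanism, and it is where torsion-freeness enters a \emph{second} time, beyond making Theorem \ref{bousfields-theorem} applicable. One puts the projective model structure on $s\babp$ via the Dold--Kan correspondence (following \cite{GS}); the map of simplicial objects $K^{*}(X;\Z_p)_{H}\leftarrow K^{*}(K^{\bullet}X;\Z_p)_{H}$ is a weak equivalence between fibrant objects, and --- because all the groups involved are torsion free by Theorem \ref{bousfields-theorem}, hence projective in $\babp$ (Lemma \ref{cofibrancy}) --- both objects are cofibrant. Whitehead's theorem then upgrades the weak equivalence to a \emph{simplicial homotopy equivalence}, and simplicial homotopy equivalences are preserved by an arbitrary functor, in particular by the non-exact $W$. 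Applying $W$ and then forgetting down to simplicial abelian groups yields the weak equivalence in $K^{*}(-;\Z_p)$, after which your proposed passage to general $N$ goes through. (Your secondary worry about $\Omega^{\infty}$ versus $\Omega^{\infty}_{0}$ dissolves here, since Bousfield's theorem computes the $K$-theory of the basepoint component directly; the $\pi_{0}$ bookkeeping belongs to the proof of Theorem \ref{main-completion-theorem}, not to this one.) Without the cofibrancy-plus-Whitehead rigidification, your argument fails at its central claim.
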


The proof amounts to showing that
\begin{equation}
N^{*}(\Omega^{\infty}\Sigma^{\infty}X)  \xleftarrow{} N^{*}(\Omega^{\infty}_{0}(\widehat{K^{\bullet}\Sigma^{\infty}X}))
\end{equation}
is a weak equivalence of simplicial abelian groups for every $p$-complete $K$-module spectrum $N$.

\section{\bf Bousfield's computation of $K^{*}(\Omega_{0}^{\infty}E;\Z_p)$}

In order to proceed we must summarize some of the main results of \cite{BO6}. In that paper Bousfield proves the remarkable theorem that if $E$ is a spectrum with  
$K^*(E;\Z_p)$ torsion-free,  then $K^*(\Omega_{0}^{\infty}E;\Z_p)$ is an algebraically 
defined functor of $K^*(E;\Z_p)$.  The $K$-cohomology of the infinite loop space is regarded as an object in the category of $Z/2$-graded $p$-adic $\Lambda$-rings  and the 
$K$-cohomology of the spectrum is an object in the category of $Z/2$-graded stable $p$-adic Adams modules.   
(Bousfield introduced this terminology in \cite{BO7}. In \cite{BO6} he calls them $Z/2$-graded $p$-adic $\psi^{*}$-modules.)
This section consists of a brief outline of the (somewhat elaborate) relevant definitions.   See \cite{BO6} for proofs.

\begin{definition} A $\Z/2$-graded finite stable $p$-adic Adams module is  a $\Z/2$-graded finite Abelian $p$-group $F$ with endomorphisms $\psi^{k}:F\to F$ for 
$k\in \Z - p\Z$ such that:
\renewcommand{\theenumi}{\roman{enumi}}
\begin{enumerate}
\item $\psi^{1} = $ {Id} and $\psi^{j}\psi^{k} = \psi^{jk}$ for all $j,k \in \Z-p\Z$.
\item There exists an integer $n\ge 1$ such that $\psi^k = \psi^{k+p^nj}$ on $F$ for all $k\in \Z - p\Z$ and $j\in \Z$.
\end{enumerate}
A $\Z/2$-graded stable $p$-adic Adams module is  the topologized inverse limit of a system of  $\Z/2$-graded finite stable $p$-adic Adams modules.
(In \cite{BO6} this is called a $\Z/2$-graded $p$-adic $\psi^{*}$-module.)

A $\Z/2$-graded stable $p$-adic Adams module $M$ is called linear if $\psi^{k} = k$ on $M^{0}$ and $\psi^{k} = 1$ on $M^{1}$, for $k\in \Z - p\Z$. 
\end{definition}

As noted in \cite{BO6},  if $E$ is an arbitrary spectrum then $\{K^{0}(E;\Z_p),K^{1}(E;\Z_p)\}$ is a $\Z/2$-graded stable $p$-adic Adams module with the usual Adams operations.

On the unstable side start with the following:

\begin{definition}
A  $\theta^{p}$-ring $R$ is  a commutative ring with identity together with a function $\theta^{p}:R\to R$ satisfying:
\renewcommand{\theenumi}{\roman{enumi}}
\begin{enumerate}
\item $\displaystyle \theta^{p}(a+b) = \theta^{p}(a) + \theta^{p}(b) - \sum_{i=1}^{p-1}\limits\dfrac{1}{p}\binom{p}{i}a^ib^{p-i}$.
\item $\theta^{p}(ab) = (\theta^{p}a)b^{p}+ a^{p}(\theta^{p}b) + p(\theta^{p}a)(\theta^{p}b)$.
\item $\theta^{p}(1) =0$.
\end{enumerate}
A $\psi^{p}$-module $M$ is just an abelian group with an endomorphism \break$\psi^{p}:M\to M$.
We can make a $\theta^{p}$-ring $R$  into a $\psi^{p}$-module by defining 
$\psi^{p}:R\to R$ by $\psi^{p}(a) = a^{p} + p\theta^{p}(a)$.   Then we have 
$\psi^{p}\theta^{p} = \theta^{p}\psi^{p}$. 
\end{definition}

\begin{definition} 
A $\Z/2$-graded $\theta^{p}$-ring $R =\{R^{0},R^{1}\}$ is a strictly commutative $\Z/2$-graded ring for which $R^{0}$ is a $\theta^{p}$-ring and $R^{1}$ is a $\psi^{p}$-module,  and these actions satisfy:
\renewcommand{\theenumi}{\roman{enumi}}
\begin{enumerate}
\item $\psi^{p}(ax) = \psi^{p}(a)\psi^{p}(x)$ for $a\in R^{0}$ and $x\in R^{1}$.
\item $\theta^{p}(xy) = \psi^{p}(x)\psi^{p}(y)$ for $x,y \in R^{1}$. 
\end{enumerate}
\end{definition}

We can make $\Z_p$ into a  $\theta^{p}$-ring by defining $\theta^{p}(a) = (a-a^p)/{p}$.  Then $\psi^{p}(a) = a$.   We say a $\theta^{p}$-ring $R$ is a $\theta^{p}$-ring over $\Z_p$ if there is a map of $\theta^{p}$-rings $\Z_p\to R$.

\begin{definition}
A $\Z/2$-graded finite $p$-adic $\theta^{p}$-ring $S$ is an augmented $\Z/2$-graded $\theta^{p}$-ring over $\Z_p$ such that:
\renewcommand{\theenumi}{\roman{enumi}}
\begin{enumerate}
\item the augmentation ideal $\tilde{S}$ is finite $p$-torsion and nilpotent.
\item for each $x\in \tilde{S}^{0}$ and $y\in {S}^{1}$ there is an $n>0$ with \break$(\theta^{p})^{n}(\theta^{p}x-x) = 0$ and 
$(\psi^{p})^{n}(\psi^{p}y - y) = 0$. 
\end{enumerate}
\end{definition}

\begin{definition}
A $\Z/2$-graded finite $p$-adic $\theta^{p}$-ring $S$ is said to be equipped with $p$-adic Adams operations if it is has endomorphisms 
\break$\psi^{k}:S\to S$ for $k$ prime to $p$ which satisfy:
\renewcommand{\theenumi}{\roman{enumi}}
\begin{enumerate}
\item $\theta^{p}\psi^{k} = \psi^{k}\theta^{p}$, $\psi^{p}\psi^{k} = \psi^{k}\psi^{p}$, $\psi^{1} = 1$, and $\psi^{j}\psi^{k} = \psi^{jk}$.
\item the operations $\psi^{k}:\tilde{S} \to \tilde{S}$ are periodic in $k\in Z^{+} -  p\Z_p$ with some period $p^r$. 
\item $\psi^{k}x \cong kx \mod \Gamma^{2}\tilde{S}^{0}$ and $\psi^{k}y \cong y \mod \Gamma^{2}S^{1}$ for each $k\in \Z- p\Z$, $x\in \tilde{S}^{0}$,
and $y\in S^{1}$,  where 
\begin{align*}
 \Gamma^{2}\tilde{S}^{0} &= \{x \in \tilde{S}^{0} \vert (\theta^{p})^{n} x = 0\, \text{for some $n> 0$}\},\\
  \Gamma^{2}{S}^{1} &= \{x \in {S}^{1} \vert (\psi^{p})^{n} x = 0\, \text{for some $n> 0$}\}.
\end{align*}
\end{enumerate}
A $\Z/2$-graded $p$-adic $\theta^{p}$-ring equipped with $p$-adic Adams operations  is the topologized inverse limit of an inverse system of 
$\Z/2$-graded finite $p$-adic $\theta^{p}$-rings equipped with $p$-adic Adams operations.
\end{definition} 

Finally we can say that it is shown in \cite{BO6} that if $X$ is a connected CW-complex, then $K^{*}(X;\Z_p)$ is A $\Z/2$-graded $p$-adic $\theta^{p}$-ring equipped with $p$-adic Adams operations.
  
\vspace{.2cm}
  
\noindent {\bf Note:}   Bousfield spends much of \cite{BO6} studying $\Z/2$-graded $p$-adic $\Lambda$-rings.    He proves that this notion is the same thing as a 
$\Z/2$-graded $p$-adic $\theta^{p}$-ring equipped with $p$-adic Adams operations.  In this paper we will make use of the $\theta^{p}$-ring description of the $p$-adic $K$-cohomology of a space rather than the $\Lambda$-ring description.   

\subsection{The functor $W$}

Now we  summarize Bousfield's definition of the functor which determines the $p$-adic $K$-cohomology of $\Omega^{\infty}E$ in terms of that of $E$.   We use the following notation.

%
%
\begin{itemize}
\item $\abp$ =  the category of $\Z/2$-graded $p$-profinite Abelian groups.
\item $\babp$ = the category of morphisms between two objects in $\abp$.
\item $\thetapring$  = the category of $\Z/2$-graded $p$-adic $\theta^{p}$-rings.
\item $\Adp$ = the category of $\Z/2$-graded stable $p$-adic Adams modules.
\item $\bar{\Adp}$ = the category of maps between two objects in $\Adp$.
\item $\bar{\Adp^{\text{lin}}}$ = the subcategory of  $\bar{\Adp}$ with linear target.
\item $\thetapringAd$  = the category of $\Z/2$-graded $p$-adic $\theta^{p}$-rings equipped with $p$-adic Adams operations.
\end{itemize}

\begin{definition}[7.4 of \cite{BO6}]
The functor $W:\babp \to \thetapring$ is the left adjoint of the forgetful functor, which takes $R$ to $(\tilde{R}\to \tilde{R}/\hat{\Gamma}^{2}\tilde{R})$. 
\end{definition}

The functor $W$ can be prolonged to a functor $W:   \bar{\Adp^{\text{lin}}}   \to    \thetapringAd      $ 
using the Adams operations on the source to define Adams operations on the target (8.2 of \cite{BO6}).
 
Suppose $E$ is a spectrum, and consider the connective cover $E\langle 0 \rangle \to E$, which induces an isomorphism in $K^{*}(-;\Z_p)$. This induces a map of $\Z/2$-graded stable $p$-adic Adams modules
\[K^{*}(E;\Z_p)_{H}:K^{*}(E;\Z_p) \to H^{\bullet}(E\langle 0 \rangle;\Z_p)\]
where the action of $\psi^{k}$ on $\{H^{2}(E(\langle 0 \rangle;\Z_p), H^{1}(E\langle 0 \rangle;\Z_p)\}$ is defined to be linear. 
Thus $K^{*}(E;\Z_p)_{H}$ is an object in $\bar{\Adp^{\text{lin}}}$.

The main topological theorem of \cite{BO6} is the following.

\begin{theorem}[Theorem 8.3 of \cite{BO6}, special case]\label{bousfields-theorem}
Suppose $E$ is a spectrum with $K^{*}(E;\Z_p)$ torsion free.   Then 
\[WK^{*}(E;\Z_p)_{H}  \cong   K^{*}(\Omega_{0}^{\infty}E;\Z_p)  \]
is an isomorphism in $\thetapringAd$.   Furthermore $K^{*}(\Omega_{0}^{\infty}E;\Z_p)$ is torsion free.
\end{theorem}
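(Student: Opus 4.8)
The result is Bousfield's, so the plan is to reconstruct the shape of his argument in \cite{BO6}. The natural starting point is the comparison map. Since $K^{*}(\Omega_{0}^{\infty}E;\Z_p)$ is an object of $\thetapringAd$ and its underlying datum in $\bar{\Adp^{\text{lin}}}$ receives the map $K^{*}(E;\Z_p)_{H}$, the universal property of $W$ as a left adjoint produces a natural morphism
\[\eta\colon WK^{*}(E;\Z_p)_{H}\longrightarrow K^{*}(\Omega_{0}^{\infty}E;\Z_p)\]
in $\thetapringAd$. Everything then reduces to proving that $\eta$ is an isomorphism and that the target is torsion free.

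Both sides of $\eta$ are \emph{exponential} functors in $E$: they convert the additive (fiber-sequence) structure of spectra into the multiplicative $\theta^{p}$-ring structure. On the topological side this is because $\Omega^{\infty}$ is a right adjoint and hence carries a homotopy fiber sequence of spectra $E'\to E\to E''$ to a homotopy fiber sequence of infinite loop spaces, whose $K$-cohomology is governed by a $K$-cohomology Eilenberg--Moore (cobar) spectral sequence; on the algebraic side it is because $W$ is a left adjoint and so sends the corresponding glued-together input to a tensor-type construction of $\theta^{p}$-rings. My plan is therefore to verify $\eta$ on a generating class of spectra and then propagate along such fiber sequences.

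For the generating class I would take the spectra whose $K$-cohomology is understood directly, namely suspensions of the $K$-theory spectrum (equivalently, in the $p$-complete $K$-local world, the free $K$-modules), for which $\Omega_{0}^{\infty}$ and its $K$-cohomology can be computed by hand and matched against the free $\theta^{p}$-ring that $W$ produces on the corresponding linear Adams module. With the base case in place, an induction up a Postnikov- or $K$-based cellular tower of a torsion-free $E$ reduces each step to a single fiber sequence: one compares the Eilenberg--Moore spectral sequence computing $K^{*}$ of the total space with the algebraic description of $W$ on the assembled input, checking that $\eta$ induces an isomorphism on the associated graded and respects the operations $\theta^{p}$ and $\psi^{k}$.

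The main obstacle is precisely this last comparison: one must show that the $K$-cohomology Eilenberg--Moore spectral sequence of each fibration of infinite loop spaces collapses with no extension problems, and that the resulting multiplicative and Adams-operation structure is exactly the one $W$ manufactures. This is where the torsion-freeness hypothesis on $K^{*}(E;\Z_p)$ is essential: it forces the spectral sequence to degenerate for formal ($\Tor$-vanishing) reasons, eliminates potential extensions in the associated graded, and guarantees that $W$ of a torsion-free input is again torsion free—which simultaneously delivers the final torsion-freeness assertion for $K^{*}(\Omega_{0}^{\infty}E;\Z_p)$.
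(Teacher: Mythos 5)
The first thing to note is that the paper does not prove this statement at all: it is imported verbatim as a special case of Theorem 8.3 of \cite{BO6}, and the surrounding section says explicitly ``See \cite{BO6} for proofs.'' So your reconstruction can only be measured against Bousfield's argument, not against anything in this paper. Your opening move is correct and is where any proof must start: the adjunction defining $W$ produces the comparison map $\eta$ in $\thetapringAd$, and anchoring the comparison on $K$-module spectra --- where $\Omega^{\infty}_{0}$ yields ($p$-completed) spaces of the homotopy type of $BU$ and $U$, whose $p$-adic $K$-cohomology is classically known --- is consistent with what Bousfield actually does.

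However, two of your load-bearing steps fail as stated. First, induction up a Postnikov tower is hopeless in $K$-theory: for $n\ge 2$ the Eilenberg--MacLane objects $K(\pi,n)$ have vanishing reduced $p$-adic $K$-cohomology (Anderson--Hodgkin), so Postnikov layers are invisible to $K^{*}(-;\Z_p)$, torsion-freeness is not inherited by the stages, and there is no convergence statement along the tower. What replaces this in \cite{BO6} is a structural use of the torsion-freeness hypothesis that your sketch misses: a torsion-free input admits a two-term resolution by injectives in the relevant category of stable $p$-adic Adams modules, so that (after $K$-localization) $E$ is realized as the fiber of a single map $N^{0}\to N^{1}$ of $K$-module spectra, and the entire ``induction'' collapses to one fibration of infinite loop spaces. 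Second, the $K$-cohomology Eilenberg--Moore spectral sequence of such a fibration does not degenerate ``for formal $\Tor$-vanishing reasons'': convergence and collapse of $K$-based Eilenberg--Moore/cobar spectral sequences is precisely the hard analytic content, and Bousfield proves bespoke strong-collapse criteria --- of exactly the kind this paper itself invokes elsewhere as Theorem 10.11 of \cite{BO6} --- rather than appealing to formal degeneration. Finally, you assert without argument that $W$ carries torsion-free input to torsion-free output; that is a genuine algebraic theorem about the free $\theta^{p}$-ring functor proved in \cite{BO6}, not a formality, and it is what delivers the last sentence of the statement. In short, your skeleton (comparison map, base case on $K$-modules, a fibration step) matches Bousfield's, but the two devices you propose to carry the argument --- Postnikov induction and formal spectral-sequence collapse --- are exactly the points where the proof cannot be run that way.
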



\vspace{.2cm}

\subsection{Proof of \ref{looping-down-the-triple-resolution}}

\begin{proof}
First we show that we have a weak equivalence of simplicial abelian groups in $p$-adic $K$-cohomology.

We have a map between simplicial objects in $\bar{\Adp}$, 
\begin{equation}
\label{stable-projective-resolution}K^{*}(X;\Z_p)_{H}  \xleftarrow{} K^{*}(K^{\bullet}X;\Z_p)_{H}
\end{equation}
with the target being a constant simplicial object.   Applying the forgetful functor we get a map in the category $s\babp$. 
The category $\abp$ is an abelian category with enough projectives, as is the diagram category $\babp$.    We give $\ch\babp$, the category of non-negatively graded chain complexes, the projective model category structure. Then we transport this to a model category structure on $s\babp$
via the Dold-Kan equivalence
\[  K:   \ch\babp \rightleftarrows  s\babp :N .\]
See \cite{GS} for the details of these constructions. 

The map (\ref{stable-projective-resolution}) is weak equivalence between fibrant objects because that's true in the category of maps between simplicial abelian groups.   The source and target of  (\ref{stable-projective-resolution}) are cofibrant by Lemma \ref{cofibrancy}.  It follows from Whitehead's Theorem (for example see Theorem 1.10 page 73 of \cite{GJ}) that (\ref{stable-projective-resolution}) is a homotopy equivalence in $s\babp$ as well. 

Now apply the functor $W$ to get a homotopy equivalence in $s\thetapringAd$.   Apply the forgetful functor to simplicial abelian groups to get the desired the weak equivalence. 

To prove the result for $N$-cohomology,  where $N$ is an arbitrary $p$-complete $K$-module spectrum, observe that since $N$ is $p$-complete, the coefficients $N_{*}$ are Ext-$p$-complete abelian groups,  therefore by Lemma 11.11 of \cite{BO9},  the $N$-cohomology depends functorially on the $p$-adic $K$-cohomology, and the result follows. 

\end{proof}

%
%

\section{\bf Conclusion}

For our spectra  $\Sigma^{\infty}X$ the stable $p$-adic $K$-completion  is just the $p$-completion of the $K$-theory localization \cite{BO2}.  This is because there is a homotopy fiber square obtained by applying Proposition 2.9 of \cite{BO2} termwise to the cosimplicial spectra:

\begin{equation}
 \begin{tikzcd}
\Tot(K^\bullet\Sigma^{\infty}X)  \arrow{d}\arrow{r} & \Tot(\widehat{K^\bullet\Sigma^{\infty}X})\arrow{d} \\
\Tot(K^\bullet\Sigma^{\infty}X)_{(0)} \arrow{r}  &\Tot(\widehat{K^\bullet\Sigma^{\infty}X})_{(0)} \\
 \end{tikzcd}
\end{equation}
Since the bottom two spectra are $HQ$-local so is the fiber of the bottom map, so the same is true of the fiber of the top map. This means the top map induces an isomorphism in mod-$p$ homotopy groups, which implies that it is $p$-completion.  

The homotopy groups of the $K$-theory localizations of the spectra  $\Sigma^{\infty}S^{2n+1}$ and  $\Sigma^{\infty}B_{2(p-1)(n+1)-1}$ are well known (see for example \cite{TH1} or \cite{MT3}).  In the long exact sequence of homotopy groups for (\ref{main-long-exact-sequence}), when  $(p-1) \vert (n-1)$,  the group $\pi_{2}(\widehat{Q(\Sigma^{2n+1}B_{2(p-1)(n+1)-1})}_{\hat{K}})$ is non-trivial and the boundary homomorphism
\[
\pi_{2}(\widehat{Q(\Sigma^{2n+1}B_{2(p-1)(n+1)-1})}_{\hat{K}}) \xrightarrow{\partial} \pi_{1}(\widehat{S^{2n+1}}_{\hat{K}}).
\] 
is non-zero. This follows from the calculations of \cite{BT}, \cite{TH1} and \cite{MT3}.
Since the completion is not simply connected it is not equivalent to the localization.

\section{\bf Appendix}

%
%

\begin{lemma}\label{E-completion-localization} 
Suppose $E$ is a ring spectrum in the stable homotopy category. Then for a space or spectrum $A$, $\hat{A}_{E}$ is $E$-local.   If we further assume that $E_{*}$-acyclic spectra are the same as $E^*$-acyclic spectra then    
$A$ is $E$-good if and only if the natural map $A_{E}\xrightarrow{\cong }\hat{A}_{E}$ (between the Bousfield localization and the completion)
is a homotopy equivalence. 
\end{lemma}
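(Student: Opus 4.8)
The plan is to handle the two assertions separately, using the description of $\hat{A}_{E}$ as the total object of a $\mathcal{G}$-injective resolution together with the universal property of the Bousfield localization $A_{E}$.

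First I would prove that $\hat{A}_{E}$ is $E$-local. Writing $\hat{A}_{E}=\Tot Y^{\bullet}$ for a (Reedy fibrant) $\mathcal{G}$-injective resolution $A\to Y^{\bullet}$, each $Y^{n}$ is a retract of a product of injective models, i.e.\ of objects $\Omega^{\infty}N$ with $N$ an $E$-module spectrum (for spectra one uses the $E$-modules themselves). Every $E$-module spectrum $N$ is $E$-local: since $N$ is a retract of $E\wedge N$, for $E_{*}$-acyclic $W$ the group $[W,N]_{*}$ is a retract of $[W,E\wedge N]_{*}$, which vanishes (for $E=K$ this is the universal coefficient spectral sequence $\Ext_{K_{*}}(K_{*}W,N_{*})\Rightarrow N^{*}(W)$, whose input is zero when $K_{*}W=0$). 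As $\Omega^{\infty}$ is right adjoint to $\Sigma^{\infty}$, the space $\Omega^{\infty}N$ is then $E$-local, and $E$-locality is closed under retracts and arbitrary products, so each $Y^{n}$ is $E$-local. Finally $E$-local objects are closed under homotopy limits, and $\Tot$ is a homotopy limit, so $\hat{A}_{E}$ is $E$-local. In particular the completion map $\alpha\colon A\to\hat{A}_{E}$ factors through the localization, producing the natural map $\phi\colon A_{E}\to\hat{A}_{E}$ of the statement.

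For the equivalence I would run the following chain under the hypothesis that $E_{*}$-acyclic and $E^{*}$-acyclic spectra coincide, so that $E_{*}$-equivalence, $E^{*}$-equivalence, and $\mathcal{G}$-equivalence all agree. The key input is the characterization, analogous to Bousfield--Kan (see \cite{BK2}), that $A$ is $E$-good if and only if the completion map $\alpha$ is a $\mathcal{G}$-equivalence; combined with the acyclicity hypothesis this says $A$ is $E$-good iff $\alpha$ is an $E_{*}$-equivalence. Writing $\ell\colon A\to A_{E}$ for the localization (an $E_{*}$-equivalence) and $\alpha=\phi\circ\ell$, we see $\alpha$ is an $E_{*}$-equivalence iff $\phi$ is. But $\phi$ is a map between the two $E$-local objects $A_{E}$ and $\hat{A}_{E}$, and an $E_{*}$-equivalence between $E$-local objects is a homotopy equivalence. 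Stringing these together yields: $A$ is $E$-good $\iff$ $\alpha$ is an $E_{*}$-equivalence $\iff$ $\phi$ is an $E_{*}$-equivalence $\iff$ $\phi$ is a homotopy equivalence.

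The main obstacle is the characterization ``$A$ is $E$-good iff $\alpha$ is a $\mathcal{G}$-equivalence.'' Unwinding Definition \ref{K-good-bad}, $E$-goodness means the unit $\alpha_{\hat{A}_{E}}\colon\hat{A}_{E}\to\widehat{(\hat{A}_{E})}_{E}$ is an equivalence, whereas Corollary 6.7 of \cite{BO9} tells us directly only that a $\mathcal{G}$-equivalence $\alpha$ induces an equivalence $\hat{(\alpha)}_{E}$ on completions. Bridging these requires showing that the two maps $\hat{A}_{E}\rightrightarrows\widehat{(\hat{A}_{E})}_{E}$, namely the unit $\alpha_{\hat{A}_{E}}$ and the completed map $\hat{(\alpha)}_{E}$, agree up to homotopy --- the triple identity in the resolution model category --- so that one is an equivalence exactly when the other is. I expect this, rather than the formal localization bookkeeping, to be where the real work lies. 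A secondary technical point is the claim that $E$-module spectra are $E$-local for a general ring spectrum $E$: it is immediate for $E=K$ via the universal coefficient spectral sequence, and in general one invokes the corresponding statement from Bousfield's localization theory.
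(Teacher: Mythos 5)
Your proposal is correct and takes essentially the same route as the paper: each term of a fibrant $\mathcal{G}$-resolution is $E$-local (because $E$-module spectra are $E$-local, equivalently $N^{*}(C)=0$ when $E_{*}(C)=0$), hence $\hat{A}_{E}=\Tot Y^{\bullet}$ is $E$-local, and then under the acyclicity hypothesis $\mathcal{G}$-, $E^{*}$- and $E_{*}$-equivalences coincide, so that $A$ is $E$-good iff $\alpha$ is an $E_{*}$-equivalence iff $\phi\colon A_{E}\to\hat{A}_{E}$ is an $E_{*}$-equivalence between $E$-local objects, i.e.\ an equivalence. The ``main obstacle'' you flag --- that $A$ is $E$-good iff $\alpha$ is a $\mathcal{G}$-equivalence --- is exactly Proposition 8.5 of \cite{BO9}, which the paper simply cites, so no extra work is needed there; the only small slip is that $[W,E\wedge N]_{*}$ does not vanish merely because $E_{*}W=0$ (the standard argument instead factors any $W\to N$ through $E\wedge W\simeq *$ using the module structure), though your deferral to Bousfield's localization theory covers this point.
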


\begin{proof}  Take $\mathcal{G}$  to be the class $\mathcal{G} = \{\Omega^{\infty}N \vert\,\,\text{ $N$ 
is an $E$-module spectrum} \}$ 

Every $\mathcal{G}$-injective space or spectrum $A$ is $E$-local:  suppose $C$ satisfies $E_*(C) =0$.   Then for any $E$-module spectrum $N$, we have $N^*(C) = 0$ ( Lemma 13.1 of \cite{AD4}).  This means $C\to *$ is $\mathcal{G}$-monic.   Since $A$ is $\mathcal{G}$-injective,  $[C,A]_{*} = 0$ so $A$ is $E$-local.

If $A\to Y^{\bullet}$ is an $\mathcal{G}$-resolution, each $Y^{n}$ is $\mathcal{G}$-injective, hence  $E$-local, so $\hat{A}_E = \Tot Y^{\bullet} \cong \holim Y^{\bullet}$ is $E$-local.
This gives a diagram 
 \begin{equation}
 \begin{tikzcd}
A \arrow{rd} \arrow{r}& A_{E}\arrow{d} \\
{} & \hat{A}_{E} \\
 \end{tikzcd}
\end{equation}

Suppose the natural map $A_{E}\xrightarrow{}\hat{A}_{E}$ is an equivalence.  Then $A\to \hat{A}_E$ is an $E_*$-isomorphism,  which implies it's an 
$N^*(\hspace{1mm})$-isomorphism for any $E$-module spectrum $N$, which means it's an $\mathcal{G}$-equivalence.    By Proposition 8.5 of \cite{BO9},  $A$ is ${E}$-good.

Conversely if $A$ is $E$-good then by Proposition 8.5 of \cite{BO9} the map \break$A\to \hat{A}_{E}$ is an $E$-equivalence.    That implies that it is an $E^{*}$-isomorphism and by hypothesis an $E_*$-isomorphism.  Since $\hat{A}_{E}$ is $E$-local this implies an equivalence $A_{E}\xrightarrow{\cong }\hat{A}_{E}$.
\end{proof}

Examples of spectra satisfying the hypotheses of Lemma \ref{E-completion-localization} are Johnson-Wilson spectra $E(n)$ (\cite{HOV4}, \cite{DEV2}).

%
%

\begin{lemma}\label{Tot-and-Omega-infinity-commute}
Consider the Quillen adjunction
\[\Sigma^{\infty}:  \S \rightleftarrows \Sp: \Omega^{\infty}\]
between the category of pointed spaces and the category of spectra.   
Then for every Reedy fibrant $X^{\bullet}\in c\Sp$,   $\Omega^{\infty}(X^{\bullet})$  is Reedy fibrant in $c\S$ and 
$\Omega^{\infty}\Tot(X^{\bullet}) = \Tot(\Omega^{\infty}(X^{\bullet})) $ . 
\end{lemma}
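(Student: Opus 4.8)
The plan is to separate the two assertions and to derive both from the single structural fact that $\Omega^{\infty}$ is the right adjoint of a \emph{simplicial} Quillen pair, and hence preserves all limits, all fibrations, and all cotensors by simplicial sets.

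For the Reedy fibrancy claim, recall that a cosimplicial object $X^{\bullet}$ is Reedy fibrant exactly when, for every $n\ge 0$, the matching map $X^{n}\to M^{n}X^{\bullet}$ is a fibration, where $M^{n}X^{\bullet}$ denotes the $n$th matching object, a finite limit of copies of $X^{n-1}$ formed along the codegeneracies. Since $\Omega^{\infty}$ is a right adjoint it preserves this limit, giving a natural identification $M^{n}(\Omega^{\infty}X^{\bullet})\cong\Omega^{\infty}(M^{n}X^{\bullet})$ under which the matching map of $\Omega^{\infty}X^{\bullet}$ is precisely $\Omega^{\infty}$ applied to the matching map of $X^{\bullet}$. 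As $\Omega^{\infty}$ is right Quillen it carries fibrations to fibrations, so every matching map of $\Omega^{\infty}X^{\bullet}$ is a fibration and $\Omega^{\infty}X^{\bullet}$ is Reedy fibrant in $c\S$. This step also guarantees that $\Tot(\Omega^{\infty}X^{\bullet})$ is the homotopically meaningful (derived) totalization, so that the asserted equality has content.

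For the commutation with $\Tot$, I would express the totalization as the end
\[
\Tot(X^{\bullet}) \;=\; \int_{[n]\in\Delta}(X^{n})^{\Delta^{n}},
\]
the standard presentation of $\Tot$ as a limit assembled from products, an equalizer, and the cotensors $(X^{n})^{\Delta^{n}}$ of the spectra $X^{n}$ by the simplicial sets $\Delta^{n}$. Because $\Omega^{\infty}$ is a right adjoint it commutes with the product and the equalizer, and because the adjunction $\Sigma^{\infty}\dashv\Omega^{\infty}$ is simplicially enriched it commutes with the cotensor: for any spectrum $Y$ and simplicial set $K$ there is a natural isomorphism $\Omega^{\infty}(Y^{K})\cong(\Omega^{\infty}Y)^{K}$. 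Applying $\Omega^{\infty}$ to the end above and pulling it inside therefore yields
\[
\Omega^{\infty}\Tot(X^{\bullet})\;\cong\;\int_{[n]\in\Delta}(\Omega^{\infty}X^{n})^{\Delta^{n}}\;=\;\Tot(\Omega^{\infty}X^{\bullet}),
\]
which is the desired identity.

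The step I expect to carry the real content, as opposed to formal bookkeeping, is the last one: verifying that $\Omega^{\infty}$ preserves cotensors by simplicial sets. Everything else is the automatic behavior of a right adjoint and of a right Quillen functor. In the model of spectra of \cite{HSS} the pair $(\Sigma^{\infty},\Omega^{\infty})$ is a simplicial adjunction on the nose, so the isomorphism $\Omega^{\infty}(Y^{K})\cong(\Omega^{\infty}Y)^{K}$ holds at the point-set level and the displayed equalities are honest identifications rather than mere weak equivalences. The only care required is to confirm that the simplicial enrichment used to define $\Tot$ in $c\Sp$ and in $c\S$ is the same enrichment with respect to which the adjunction is simplicial, so that the two totalizations are formed by the identical cotensor recipe.
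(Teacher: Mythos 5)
Your proposal is correct and follows essentially the same route as the paper's proof: both establish Reedy fibrancy from $\Omega^{\infty}$ being right Quillen (preserving the matching-object limits and fibrations), and both prove the $\Tot$ identity by writing $\Tot$ as a limit of cotensors --- your end $\int_{[n]\in\Delta}(X^{n})^{\Delta^{n}}$ is precisely the paper's equalizer of products --- and then using that the simplicial adjunction gives $\Omega^{\infty}(Y^{K})\cong(\Omega^{\infty}Y)^{K}$ on the nose. Your closing observation that the cotensor preservation is the one step with real content, the rest being formal right-adjoint bookkeeping, matches the paper's citation of the cotensor definitions in $\S$ and $\Sp$ as the crux.
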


\begin{proof} For the first statement, a cosimplicial object $X^{\bullet}$ is Reedy fibrant when  $X^{n}\to M^nX^{\bullet}$ is a fibration for each $n$, where
the $n$th matching object $M^{n}X^{\bullet}$ is defined as a certain limit.
 Since $\Omega^{\infty}$ is a right Quillen adjoint it preserves limits and fibrations, so if $X^{\bullet}$ is a Reedy fibrant cosimplicial spectrum, then $\Omega^{\infty}(X^{\bullet})$ is a Reedy fibrant cosimplicial space. 

For the second statement, we have that since $X^{\bullet}$ is Reedy fibrant, $\Tot(X^{\bullet})$ is the limit of the diagram (e.g. Chapter VIII, Section 1 of \cite{GJ} )

\[
\begin{tikzcd}
\displaystyle\prod_{n\ge 0} (X^n)^{\Delta^{n}}    \arrow[r, shift left=.5ex]\arrow[r, shift right=1.0ex]
& \displaystyle\prod_{\phi:\bold{n}\to\bold{m}} (X^m)^{\Delta^{n}}, 
\end{tikzcd}
\]
i.e. the equalizer.   Since $\Omega^{\infty}$ is a right adjoint, and hence preserves limits, we have
$\Omega^{\infty}\Tot(X^{\bullet})$ is the limit of 
\[
\begin{tikzcd}
\displaystyle\prod_{n\ge 0} \Omega^{\infty}((X^n)^{\Delta^{n}}) \arrow[r, shift left=.5ex]  \arrow[r, shift right=1.0ex] & \displaystyle\prod_{\phi:\bold{n}\to\bold{m}} \Omega^{\infty}((X^m)^{\Delta^{n}}). 
\end{tikzcd} 
\]
By definition of the cotensor objects in $\S$ and $\Sp$ (e.g. Chapter II, Section 2 of \cite{GJ} and Definition 1.2.9 of \cite{HSS}) we have $\Omega^{\infty}(X^{\Delta^{n}}) = (\Omega^{\infty}X)^{\Delta^{n}} $, 
so this limit is also the limit of 
\[
\begin{tikzcd}
\displaystyle\prod_{n\ge 0} (\Omega^{\infty}X^n)^{\Delta^{n}} \arrow[r, shift left=.5ex]  \arrow[r, shift right=1.0ex]& \displaystyle\prod_{\phi:\bold{n}\to\bold{m}} (\Omega^{\infty}X^m)^{\Delta^{n}}. 
\end{tikzcd}
\] 
Since $\Omega^{\infty}(X^{\bullet})$ is Reedy fibrant, this limit is $\Tot(\Omega^{\infty}(X^{\bullet}))$.

\end{proof}

%
%

\begin{lemma}\label{cofibrancy}
Let $X$ be as in Theorem \ref{looping-down-the-triple-resolution}.   The simplicial objects $K^{*}(X;\Z_p)_{H}$ and $K^{*}(K^{\bullet}X;\Z_p)_{H}$ are cofibrant in $s\babp$.
\end{lemma}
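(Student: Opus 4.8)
The plan is to move everything across the Dold--Kan equivalence $K:\ch\babp \rightleftarrows s\babp:N$ that defines the model structure, reducing cofibrancy to a degreewise statement. An object $S\in s\babp$ is cofibrant exactly when $NS$ is cofibrant in the projective model structure on $\ch\babp$, i.e. when $NS$ is degreewise projective; by the degeneracy splitting $S_n\cong\bigoplus_{[n]\twoheadrightarrow[k]}(NS)_k$ this is equivalent to each $S_n$ being projective in $\babp$. For the constant object $K^{*}(X;\Z_p)_{H}$ the normalization is concentrated in degree zero, so cofibrancy is just the assertion that $K^{*}(X;\Z_p)_{H}$ is a projective object of $\babp$. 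For $K^{*}(K^{\bullet}X;\Z_p)_{H}$ I must show that every term $K^{*}(K^{n}X;\Z_p)_{H}$, $n\ge 0$, is projective in $\babp$. Thus the lemma reduces entirely to recognizing projective objects of $\babp$.

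Next I would pin down these projectives. Since $\babp$ is the category of morphisms of $\abp$, the two evaluation functors have left adjoints $P\mapsto(P\xrightarrow{=}P)$ and $Q\mapsto(0\to Q)$, so the projectives are precisely the retracts of the objects $(P\hookrightarrow P\oplus Q)$ with $P,Q$ projective in $\abp$; equivalently, a morphism $f$ of $\abp$ is projective in $\babp$ iff its source and cokernel (hence its target) are projective in $\abp$ and $f$ is a split monomorphism. In $\abp$ (the $\Z/2$-graded $p$-profinite abelian groups) the torsion-free objects are projective --- under Pontryagin duality they are the duals of the divisible, hence injective, $p$-torsion groups --- and this is the fact I would take from \cite{BO6}. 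It therefore suffices to show, for $E=\Sigma^{\infty}X$ and for each $E=K^{n}\Sigma^{\infty}X=K^{\wedge(n+1)}\wedge\Sigma^{\infty}X$, that (a) both the source $K^{*}(E;\Z_p)$ and the target of the structure map of $K^{*}(E;\Z_p)_{H}$ are torsion-free, and (b) that structure map is a split monomorphism in $\abp$.

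For (a), torsion-freeness of $K^{*}(\Sigma^{\infty}X;\Z_p)$ is the standing hypothesis on $X\in\{S^{2n+1},B_{q(n+1)-1}\}$, while for the smash powers $K^{\wedge(n+1)}\wedge\Sigma^{\infty}X$ it follows from the flatness (torsion-freeness) of the $p$-adic $K$-theory cooperations together with a Künneth argument; the target is torsion-free because, as in the setup of Theorem \ref{bousfields-theorem}, $H^{\bullet}(E\langle 0\rangle;\Z_p)$ carries the same underlying $\Z/2$-graded $p$-profinite group as $K^{*}(E;\Z_p)$ (the connective cover inducing an isomorphism in $K^{*}(-;\Z_p)$), only re-equipped with the linear Adams action. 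This same identification makes (b) essentially automatic: after forgetting Adams operations to $\babp$ the structure map is an isomorphism of underlying objects, in particular a split monomorphism, so $K^{*}(E;\Z_p)_{H}$ is a split mono between projectives and hence projective. I expect the genuine obstacle to lie in (b) --- verifying that it is the whole morphism, not merely its two vertices, that is projective, i.e. that the structure map is split with torsion-free cokernel --- together with the torsion-freeness of the cooperation terms in (a); both are to be extracted from Bousfield's torsion-free conclusions in \cite{BO6} and Theorem \ref{bousfields-theorem}.
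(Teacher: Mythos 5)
Your first two moves match the paper and are fine: the Dold--Kan reduction of cofibrancy in $s\babp$ to degreewise projectivity (this is exactly how the paper uses Proposition 4.2 of \cite{GS}), and the fact that torsion-free objects of $\abp$ are projective by Pontryagin duality, which the paper extracts from Theorem \ref{bousfields-theorem}. The proposal breaks at your step (b). The target of $K^{*}(E;\Z_p)_{H}$ is the \emph{ordinary} $p$-adic cohomology $\{H^{2}(E\langle 0\rangle;\Z_p),H^{1}(E\langle 0\rangle;\Z_p)\}$ of the connective cover with linearized Adams action, not its $K$-cohomology; the remark that $E\langle 0\rangle\to E$ induces an isomorphism in $K^{*}(-;\Z_p)$ only serves to make this object natural in $E$, and in no way identifies the source and target of the structure map. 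The underlying map is essentially never an isomorphism: already for $X=S^{2n+1}$ with $n\ge 1$ the constant simplicial object is $(\Z_p\to 0)$ concentrated in odd degree --- a surjection, not a monomorphism. So the structure map is not a split mono, and by your own (categorically correct) characterization of the projectives of the arrow category, this object would be \emph{non}-projective; carried out faithfully, your argument refutes rather than proves the lemma, which is a signal you could have caught by testing $X=S^{2n+1}$ itself.

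The underlying mismatch is in which class of projectives is operative. You computed the genuine categorical projectives of $\babp$ from the left adjoints $P\mapsto(P\xrightarrow{=}P)$ and $Q\mapsto(0\to Q)$, obtaining split monos with projective source and cokernel. The paper runs in the opposite direction: it observes that in each simplicial degree the structure map is \emph{surjective} with torsion-free source and target. In $\abp$ a surjection onto a projective splits, so such an arrow decomposes as $(B\xrightarrow{=}B)\oplus(\ker\to 0)$; it is arrows built from the generators $(P\xrightarrow{=}P)$ and $(P\to 0)$ --- not your $(0\to Q)$ --- that form the projective class relevant to the application of \cite{GS} here and to Bousfield's derivation of $W$, whose right adjoint $R\mapsto(\tilde R\to \tilde R/\hat\Gamma^{2}\tilde R)$ takes values in surjections. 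Consequently the substantive checks are torsion-freeness of all entries (your point (a), which is consistent with the paper, though the paper attributes it wholesale to Theorem \ref{bousfields-theorem}) together with degreewise \emph{surjectivity} of the structure maps; split monicity, which you predicted would be ``essentially automatic,'' is neither available nor what is needed.
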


\begin{proof}
Call either simplicial object $Y_{\bullet}$.
 In Proposition 4.2 of \cite{GS} (we're applying this to an arbitrary Abelian category) the authors give an explicit description of a cofibrant object.
Note that all the groups in $Y_{\bullet}$ are torsion free by Theorem \ref{bousfields-theorem}, so they are projective.   Also in each simplicial degree 
the map is a surjective map and so it is a projective object in $\babp$.  Apply the composite $KN$.  $NY_{\bullet}$ is a chain complex consisting of torsion free groups, so projective groups.   By condition (3) of Proposition 4.2 of \cite{GS} $KNY_{\bullet}$ is cofibrant in 
$s\babp$ and $Y_{\bullet}$ is isomorphic to $KNY_{\bullet}$ by Dold-Kan.   
\end{proof}

\bibliography{rob}{}
\bibliographystyle{plain}

\end{document}